\DeclareMathOperator{\faces}{faces}
\newtheorem{lemma}{Lemma}
\newtheorem*{cor*}{Corollary}
\newtheorem{theorem}{Theorem}
\newtheorem{corollary}{Corollary}
\newtheorem*{definition*}{Definition}
\newtheorem{remark}{Remark}
\DeclareMathOperator{\rank}{rank}
\title{On the Linear Extension Complexity of Regular $n$-gons}
\date{} 
\author{Arnaud Vandaele\thanks{Department of Mathematics and Operational Research, 
Facult\'e Polytechnique, Universit\'e de Mons, Rue de Houdain~9, 7000 Mons, Belgium. 
Emails: 
 \texttt{\{arnaud.vandaele, nicolas.gillis\}@umons.ac.be.}} \and Nicolas Gillis$^*$  
\and Fran\c{c}ois Glineur\thanks{Universit\'e catholique de Louvain, CORE and ICTEAM Institute, 
B-1348 Louvain-la-Neuve, Belgium; \texttt{francois.glineur@uclouvain.be}.} 
}
\begin{document}

\maketitle

\begin{abstract} 
In this paper, we propose new lower and upper bounds on the linear extension complexity of regular $n$-gons. 
Our bounds are based on the equivalence between the computation of (i)~an extended formulation of size~$r$ of a polytope~$P$, and (ii)~a rank-$r$ nonnegative factorization of a slack matrix of the polytope~$P$.  
The lower bound is based on an improved bound for the rectangle covering number (also known as the boolean rank) of the slack matrix of 
the $n$-gons. 
The upper bound  is a slight improvement of the result of Fiorini, Rothvoss and Tiwary [Extended Formulations for Polygons, Discrete Comput.\@ Geom.\@ 48(3), pp.~658-668, 2012]. 
The difference with their result is twofold: 
(i)~our proof uses a purely algebraic argument while Fiorini et al.\@ used a geometric argument, and 
(ii)~we improve the base case allowing us to reduce their upper bound $2 \left\lceil  \log_2(n) \right\rceil$ by one when $2^{k-1} < n \leq 2^{k-1}+2^{k-2}$ for some integer $k$. 
We conjecture that this new upper bound is tight, which is suggested by numerical experiments for small $n$. Moreover, this improved upper bound allows us to close the gap with the best known lower bound for certain regular $n$-gons (namely, $9 \leq n \leq 13$ and $21 \leq n \leq 24$) hence allowing for the first time to determine their extension complexity. 
\end{abstract} 

\textbf{Keywords.} nonnegative rank, extension complexity, regular $n$-gons, nonnegative factorization, boolean rank.

\section{Introduction} \label{intro} 

An extended formulation (or extension) for a polytope $P$ is a higher dimensional polyhedron $Q$ 
such that there exists a linear map $\pi$ with $\pi(Q) = P$.  
The size of such an extended formulation is defined as the number of facets of the polyhedron $Q$. 
The size of the smallest possible extension of $P$ is called the (linear) extension complexity of $P$ and is denoted xc($P$). 
The quantity xc($P$) is of great importance since it characterizes the minimum information necessary to represent $P$. In particular, in combinatorial optimization, it characterizes the minimum size necessary to represent a problem as a linear programming problem (taking $P$ as the convex hull of the set of feasible solutions).  
Hence although $P$ might have exponentially many facets, $Q$ might have only a few, providing a way to solve linear programs over $P$ much more effectively. 
An example of such a polytope is the permutahedron, that is, the convex hull of all permutations of the set $\{1,2,\dots,n\}$ with $n!$ vertices and $2^n-2$ facet-defining inequalities, that can be represented as the projection of a polyhedron with $\mathcal{O}(n \log(n))$ facets~\cite{G09}. 

The characterization of the extension complexity has attracted much interest recently; in particular lower bounds since they provide provable limits of linear programming to solve combinatorial optimization problems; see, e.g., \cite{FMPTdW12}. 
For example, it was recently shown that the extension complexity of the matching polytope is exponential (in the number of vertices of the graph), answering a long-standing open question whether there exists a polynomial-size linear programming formulation for the matching problem~\cite{Ro14}  which implies that although it is solvable in polynomial time, 
the standard formulation cannot be written as a linear program with a polynomial number of inequalities. 

Interestingly, most lower bounds for the extension complexity of polytopes are based on a well-known linear algebra concept: the nonnegative rank. 
The nonnegative rank of a nonnegative $m$-by-$n$ matrix $M$, denoted $\rank_+(M)$, is the minimum $r$ such that there exist a nonnegative $m$-by-$r$ matrix $U$ and a nonnegative $r$-by-$n$ matrix $V$ such that $M = UV$. 
The pair $(U,V)$ is a rank-$r$ nonnegative fatorization of $M$. 
The link between the nonnegative rank and the extension complexity of a polytope, a seminal result of Yannakakis~\cite{Y91}, 
goes as follows. Let $P$ be a polytope in dimension $d$ with 
\begin{itemize}
\item $f$ facets expressed as linear inequalities $a_i^Tx \leq b_i$ $1 \leq i \leq f$, and 
\item $v$ vertices denoted $x_j \in \mathbb{R}^d$  $1 \leq j \leq v$. 
\end{itemize} 
The slack matrix $S_P \in \mathbb{R}^{f \times v}_+$ of $P$ is defined as 
\[
S_P(i,j) = b_i - a_i^T x_j \geq 0, \qquad \text{ for all } 1 \leq i \leq f, 1 \leq j \leq v. 
\]
Note that the slack matrix of a polytope is not unique since the inequalities can be scaled, and the rows and columns permuted but this does not influence its nonnegative rank; see~\cite{GGK13} for more details. 
Note also that $\rank(S_P) = d+1$ if $P$ is full dimensional.  
Then, we have 
\[
\rank_+ (S_P) \; = \; \text{xc}(P) . 
\]
Moreover any nonnegative factorization $(U,V) \geq 0$ of $S_P = UV$ provides an explicit extended formulation for $P$ (with some redundant equalities): 
\[  
P = \{ x \in \mathbb{R}^d \ | \ A x \le b \} = \{ x \in \mathbb{R}^d \ |\ A x + U y = b \text{ and } y \ge 0 \} , 
\]
where $A \in \mathbb{R}^{f \times d}$ with $A(i,:) = a_i$ for all $i$, 
and $b \in \mathbb{R}^{f}$. 
For example, the matrix 
\[
S_6 = \begin{pmatrix}
0&1&2&2&1&0\\
0&0&1&2&2&1\\
1&0&0&1&2&2\\
2&1&0&0&1&2\\
2&2&1&0&0&1\\
1&2&2&1&0&0
\end{pmatrix} 
\]
is a slack matrix of the regular hexagon (hence it has rank three) and has nonnegative rank equal to five: 
\[
S_6 = 
\begin{pmatrix}
0&0&0&1&2\\
0&1&0&0&1\\
0&1&1&0&0\\
0&0&2&1&0\\
1&0&1&0&0\\
1&0&0&0&1\end{pmatrix}
\begin{pmatrix}
1&2&1&0&0&0\\
0&0&0&1&2&1\\
1&0&0&0&0&1\\
0&1&0&0&1&0\\
0&0&1&1&0&0\end{pmatrix} . 
\] 
This implies that the regular hexagon can be described as the projection of a higher dimensional polytope with 5 facets; see Figure~\ref{hexa} for an illustration.  
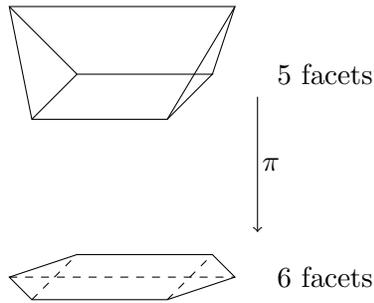
\begin{figure}
\begin{center}
\begin{tikzpicture}[scale=0.6]
	\draw (0,0) -- (0.5,-0.5) -- (3.5,-0.5) -- (5,0) -- (4.5,0.5) -- (1.5,0.5) -- cycle;
	\draw[dashed] (0.5,-0.5) -- (1.5,0.5);
	\draw[dashed] (3.5,-0.5) -- (4.5,0.5);
	\draw[dashed] (0,0) -- (5,0);
	\node at (7,0) {6 facets};
	\draw (3.5,3.5) -- (0.5,3.5) -- (0,6) -- (5,6) -- cycle;	
	\draw (0.5,3.5) -- (1.5,4.5) -- (0,6);
	\draw (1.5,4.5) -- (4.5,4.5) -- (3.5,3.5);
	\draw (4.5,4.5) -- (5,6);
	\node at (7,4.5) {5 facets};
	\draw[->] (5.5,4) -- (5.5,1);
	\node at (5.8,2.5) {$\pi$};
\end{tikzpicture}
\caption{Minimum-size extension of the regular hexagon.}
\label{hexa}
\end{center}
\end{figure} 
In this paper, we focus on the extension complexity of regular $n$-gons, and in particular on a new upper bound.

\paragraph{Extension complexity of regular $n$-gons} In the remainder of this paper, we denote $S_n$ the slack matrix of the regular $n$-gon (more precisely, any slack matrix; see Section~\ref{slackma} for a construction), hence $\rank_+(S_n)$ equals the extension complexity of the regular $n$-gon; see above. In the following, 
we describe several bounds for the nonnegative rank, focusing on the slack matrices of regular $n$-gons.

\emph{Lower bounds.} There exist several approaches to derive lower bounds for the nonnegative rank, which we classify in three classes: 
\begin{itemize}
\item \emph{Geometric}. 
Using a counting argument and the facts that 
(i) any face of a polytope is the projection of a face of its extension, and 
(ii) any face is an intersection of facets, 
it can be shown that $\rank_+(S_n) \geq \left\lceil \log_2(2n+2) \right\rceil$~\cite{G09}. 
Based on a refined geometric counting argument, Gillis and Glineur~\cite{GG10b} described a stronger lower bound for the slack matrix of polygons\footnote{They actually derived this bound for linear Euclidean distance matrices, but it also applies to the slack matrix of polygons.}: the nonnegative rank $r_+ = \rank_+(S_n)$ of $S_n$ must satisfy 
\[
n \leq \max_{3 \leq d \leq r_+-1} \; \min_{i=0,1} \; \faces(r_+,d-1,d-3+i), 
\] 
where the quantity $\faces(v,d,k)$ is the maximal number of $k$-faces of a polytope with $v$ vertices in dimension $d$, attained by cyclic polytopes~\cite{McM70}; see also~\cite[p.257, Corollary 8.28]{Z95}. We have  
\[
\faces(v,d,k-1) = \sum_{i = 0}^{\frac{d}{2}}{}^* \bigg(  
\binom{d-i}{k-i} + \binom{i}{k-d+i} \bigg) \binom{v-d-1+i}{i},  
\]
where $\sum{}^*$ denotes a sum where only half of the last term is taken for $i = \frac{d}{2}$ if $d$ is even, and the whole last term is taken for $i = \lfloor \frac{d}{2} \rfloor = \frac{d-1}{2}$ if $d$ is odd. 
This bound can be generalized to any nonnegative matrix~\cite{GG10b}, 
but it becomes difficult to compute for non-slack matrices as it requires another quantity that is in general NP-hard to compute (namely, the restricted nonnegative rank, which is always equal to $n$ for the slack matrix of a polytope with $n$ vertices).

\item \emph{Combinatorial}. These bounds are based on the sparsity pattern of the input matrix. 
The most well-known one is the rectangle covering bound (RCB) that counts the minimum number of rectangles necessary to cover all positive entries of the matrix, a rectangle being a subset of rows and columns for which the corresponding submatrix contains only positive entries; 
see~\cite{FK11} and the references therein.  Note that the RCB is equal to the boolean rank; see, e.g.,~\cite{CGP81}.  
A closely related bound is the refined rectangle covering bound (RRCB) by Oelze, Vandaele, Weltge \cite{OVW14}: in addition to covering every positive entry by a rectangle, the RRCB requires that every 2-by-2 nonsingular submatrix is touched by at least two rectangles (note that the same rectangle can be used twice). For example, the RCB for the matrix
\[
S_9 = \begin{pmatrix}
			1 & 2 & 0 & 3\\
			4 & 5 & 6 & 0\\
			7 & 8 & 9 & 0\\
		\end{pmatrix}
		\] 
is equal to two while the RRCB is equal to three. In fact, there are only three maximal rectangles (that is, rectangles not contained in any larger rectangle): 
 \[
\begin{pmatrix}
			1 & 1 & 0 & 0\\
			1 & 1 & 0 & 0\\
			1 & 1 & 0 & 0\\
		\end{pmatrix}, 
		\begin{pmatrix}
			1 & 1 & 0 & 1\\
			0 & 0 & 0 & 0\\
			0 & 0 & 0 & 0\\
		\end{pmatrix}, \text{ and } 
		\begin{pmatrix}
			0 & 0 & 0 & 0\\
			1 & 1 & 1 & 0\\
			1 & 1 & 1 & 0\\
		\end{pmatrix}, 
		\] 
		and only two of them are required to cover all positive entries  (the last two, which is the unique solution) 
		while three are necessary to touching twice all rank-two positive submatrices (which is tight since this is a 3-by-4 matrix), e.g., the block $\begin{pmatrix}
			4 & 5 \\
			7 & 8\\
		\end{pmatrix}$ touched only once with the RCB solution.   
		
		Although these bounds can be rather strong in some cases, they are computationally very expensive, and only work well for matrices with `well located' zero entries.  For the slack matrices of the regular $n$-gons, we could compute them up to $n=13$ (for larger $n$, it would take several weeks of computation with our current formulation). 

\item \emph{Convex Relaxations}. Fawzi and Parrilo developed two lower bounds for the nonnegative rank based on a sum-of-squares approximation of the copositive cone \cite{FP12, FP14}. 
These bounds are very general as they can be computed for any nonnegative matrix; however they are typically weaker than the aforementioned lower bounds, in particular for slack matrices. 

\end{itemize} 
These bounds are compared for the regular $n$-gons on 
\begin{figure}
	\vspace{-2cm}	\hspace{-2.35cm}	\includegraphics[width=21cm]{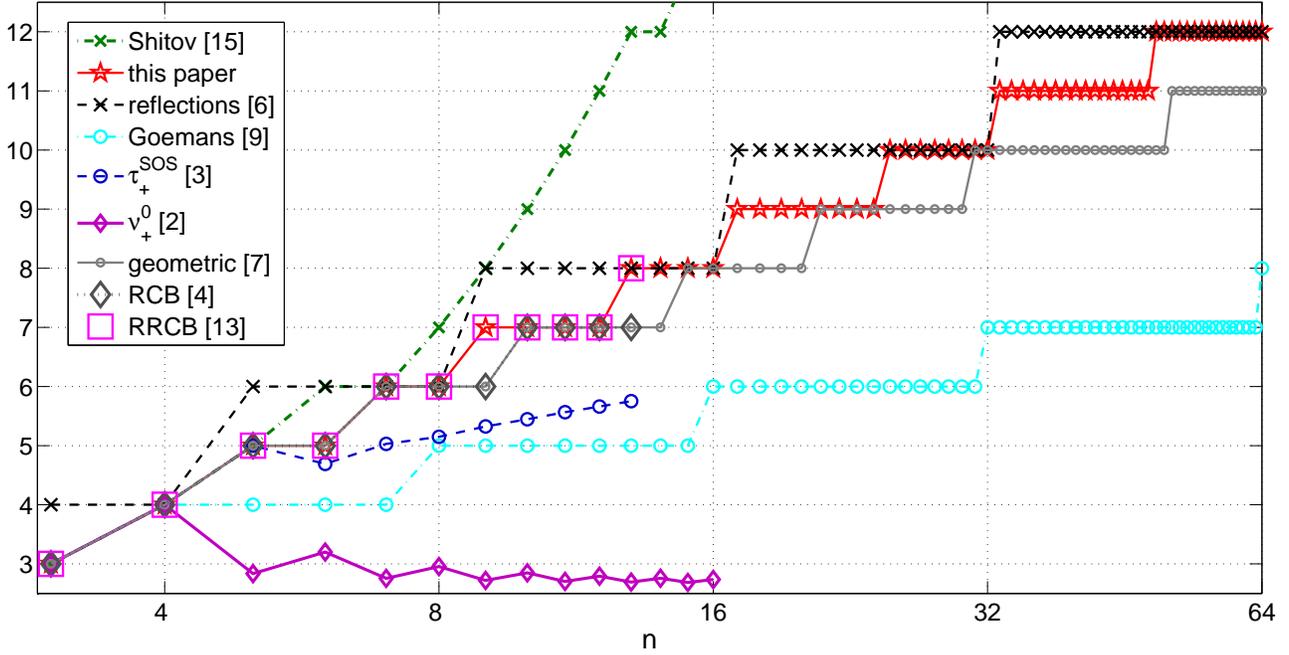}
	\vspace{-1cm}
\caption{Comparison of lower and upper bounds for the nonnegative rank of the slack matrices of regular $n$-gons, that is, $\rank_+(S_n)$. 
(Note that some bounds cannot be computed for all $n$ because of their high computational cost.) }
\label{compabounds}
\end{figure}
 Figure~\ref{compabounds}. 
We observe that the best lower bounds are the geometric bound from~\cite{GG10b} and the rectangle covering bounds~\cite{FK11, OVW14} that coincide except for $n = 9, 13$ for which only the RRCB is tight (as it matches the best upper bound; see below). \\


\emph{Upper bounds.} 
Ben-Tal and Nemirovski~\cite{BTN01} gave an extension of the regular $n$-gons when $n$ is a power of two ($n = 2^k$ for some $k$) with $2 \log_2(n) + 4$ facets. They used this construction to approximate the circle with regular $n$-gons which allowed them to approximate second-order cone programs with linear programs. 
This construction was slightly reduced to size $2 \log_2(n)$ in~\cite{G00} (again, only for $n=2^k$). 
Kaibel and Pashkovich~\cite{KP11, KP12} proposed a general construction for arbitrary $n$ of size $2 \left\lceil \log_2(n) \right\rceil +2$. 
Fiorini, Rothvoss and Tiwary~\cite{FRT12} improved the bound to $2 \left\lceil \log_2(n) \right\rceil$, which is, to the best of our knowledge, the best known upper bound for regular $n$-gons. These last bounds are based on a geometric argument using successive reflections to construct the regular $n$-gon. Note that Shitov~\cite{Shit14} proved an upper bound of $\left\lceil  \frac{6n}{7}  \right\rceil$ for the nonnegative rank of any $n$-by-$n$ rank-three nonnegative matrix, hence is applicable to the slack matrix of polygons. \\

As shown on Figure~\ref{compabounds}, prior to our new upper bound, the exact value of $\rank_+(S_n)$ is not known for most values of $n$ larger than 9 as the best lower and upper bounds do not coincide. 
Therefore, the exact value of the extension complexity of many regular $n$-gons is still unknown. 

Table~\ref{boundNR} also gives the best upper and lower bounds for $n$ up to 20. 
\begin{table}[ht!]
\begin{center}
\begin{tabular}{|c|cccccccccccccccc|}
\hline
 $n$                  &   6 & 7 & 8 & 9 & 10  &11 & 12 & 13 & 14 & 15 & 16 & 17 & 18 & 19 & 20 & 21 \\ \hline \hline 
 RRCB  \cite{OVW14}   &   \textbf 5 & \textbf  6 & \textbf  6 & \textbf 7 & \textbf  7   &\textbf 7  & \textbf  7  & \textbf  8  & ?  & ? & ? & ? & ? & ? & ? & ? \\
 geometric \cite{GG10b} &  \textbf 5 & \textbf  6 & \textbf  6 & 6 & \textbf 7  &\textbf 7   & \textbf   7   & 7 & 7 &\textbf  8 & \textbf  8 & 8 & 8 & 8 & 8 & \textbf  9 \\ 
 \hline
 Equation~\eqref{ubbest} &  \textbf 5 & \textbf  6 & \textbf  6 & \textbf  7 & \textbf  7   & \textbf  7  & \textbf  7  & \textbf  8  &  8  & \textbf  8  & \textbf  8  & 9  & 9  & 9  & 9 & \textbf  9 \\
\hline
\end{tabular}
\caption{Comparison of two lower bounds (first two rows) and the upper bound from Equation~\eqref{ubbest} for the nonnegative rank of regular $n$-gons. Bold indicates the tight bounds, that is, bounds that coincide with the nonnegative rank.}  
\label{boundNR}
\end{center}
\end{table}

\paragraph{Contribution of the Paper} 

In this paper, our contribution is mainly towfold. First, in Section~\ref{srcb}, we derive an improved lower bound for the rectangle covering number $r$ of the slack matrix of regular $n$-gons. We show that the following relation holds 
\[ 
n \leq \frac{r - \lfloor r/2 \rfloor}{r - 1} \binom{r}{ \lfloor r/2 \rfloor} , 
\]  
which improves over the best known previous relation given by $n \leq  \binom{r}{ \lfloor r/2 \rfloor}$~\cite{CGP81}. 
Although this new lower bound does not improve the best known lower bounds for the nonnegative rank of the slack matrices of regular $n$-gons (namely, the RRCB and the geometric bound; see previous paragraph), it is applicable to a broader class of matrices, namely those which have the same sparsity pattern as the slack matrices of $n$-gons. Moreover, it turns out to be a tight bound for the rectangle covering number, a.k.a.\@ the boolean rank, for some $n$ (comparing it with the upper bound from~\cite{BHJL86}). 

Second, we slightly improve the upper bound of Fiorini, Rothvoss and Tiwary~\cite{FRT12}. 
Although our approach is equivalent to that of Fiorini et al., both being recursive, our proof is rather different, being purely algebraic as opposed to their geometric approach.  Moreover, we are able to reduce the upper bound by one when $2^{k-1}  <  n  \leq 2^{k-1}+2^{k-2}$ for some $k$: this is possible by stopping the recursion earlier at a better base case (note that it would be possible to modify the proof of Fiorini et al.\@ to achieve the same bound).  
 We show that for all $n \geq 2$, 
\begin{equation} \label{ubbest}
    \rank_+(S_n) \leq  
		 \left\{ \begin{array}{ccccc} 
		2 \lceil \log_2(n) \rceil  - 1 = 2k-1  & \quad \text{for } & 2^{k-1}       & < \quad n  \quad \leq & 2^{k-1}+2^{k-2} , \\ 
		2 \lceil \log_2(n) \rceil  = 2 k   & \quad \text{for } & 2^{k-1}+2^{k-2} & <  \quad n   \quad \leq &  2^{k} . 
  \end{array} \right. 
\end{equation}
Although the improvement is relatively minor, our numerical experiments strongly suggest that this bound is tight; see the discussion at the end of Section~\ref{facto}. 
Moreover, our bound allows us to close the gap for several $n$-gons as it matches the best known lower bound, 
for $9 \leq n \leq 12$ our bound implies that $\rank_+(S_n) = 7$ and, for $21 \leq n \leq 24$, that $\rank_+(S_n) = 9$; 
see Figure~\ref{compabounds}. 
(Note that, for $n = 13$, the RRCB was, to the best of our knowledge, never computed prior to this work hence it is also the first time $\rank_+(S_n) = 8$ is claimed for $n = 13$.) \\

The paper is organized as follows. In Section~\ref{slackma}, we briefly describe the construction of the slack matrices of regular $n$-gons. In Section~\ref{srcb}, we describe our new improved lower bound for the rectangle covering of these matrices, and, in Section~\ref{facto}, we describe  our construction that proves the aforementioned upper bound. 
Then we discuss some directions for further research and conclude in Section~\ref{conclu}.

\section{The Slack Matrices of Regular $n$-gons} \label{slackma}

Let us construct the slack matrices of regular $n$-gons. 
Without loss of generality (w.l.o.g.), we use regular $n$-gons centered at the origin with their vertices located on the unit circle of radius equal to one; see Figure~\ref{smregn} for an illustration with the pentagon.  
\begin{figure}[ht!]
		\begin{center}
			\includegraphics[width=5cm]{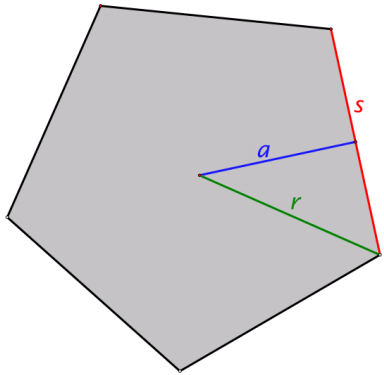}
		\end{center}
		\caption{Illustration for the construction of the slack matrices of regular $n$-gons. In this paper, we assume w.l.o.g.\@ that $r = 1$.}
	 \label{smregn} 
\end{figure}
 The length $s$ of the facets of the regular $n$-gon is given by $s=2\sin\left(\frac{\pi}{n}\right)$. 
The slack between a facet and the $k$th vertex 
(the $0$th and $(n$-$1)$th being on the considered facet, and counting along the circle in any direction) 
is equal to: 
\begin{equation} \label{cnk}
c_{{k}} = \cos\left(\frac{\pi}{n}\right) - \cos\left( (2{k}+1) \frac{\pi}{n}\right)  .  
\end{equation}
By symmetry, 
(i) our slack matrices of regular $n$-gons   are circulant matrices for which the vector $c$ is translated one element to the right on each row, and 
(ii) the vector $c$ satisfies $c_k = c_{n-1-k}$ for all $k$. 
For example, for $n=9$, we have 
		\begin{equation} \label{s9}
		S_9 = \begin{pmatrix}
			0   & c_1 & c_2 & c_3 & c_4 & c_3 & c_2 & c_1 & 0   \\
			0   & 0   & c_1 & c_2 & c_3 & c_4 & c_3 & c_2 & c_1 \\
			c_1 & 0   & 0   & c_1 & c_2 & c_3 & c_4 & c_3 & c_2 \\
			c_2 & c_1 & 0   & 0   & c_1 & c_2 & c_3 & c_4 & c_3 \\
			c_3 & c_2 & c_1 & 0   & 0   & c_1 & c_2 & c_3 & c_4 \\
			c_4 & c_3 & c_2 & c_1 & 0   & 0   & c_1 & c_2 & c_3 \\
			c_3 & c_4 & c_3 & c_2 & c_1 & 0   & 0   & c_1 & c_2 \\
			c_2 & c_3 & c_4 & c_3 & c_2 & c_1 & 0   & 0   & c_1 \\
			c_1 & c_2 & c_3 & c_4 & c_3 & c_2 & c_1 & 0   & 0   \\
		\end{pmatrix}. 
		\end{equation}
Note that, to the best of our knowledge,  the best known lower (resp.\@ upper) bound for $\rank_+(S_9)$ is 7 (resp.\@ 8). In this paper, we will improve the upper bound to 7 hence proving that $\rank_+(S_9) = 7$; see Figure~\ref{compabounds}.

\section{Lower bound for the boolean rank of $S_n$} \label{srcb}

In this section, we improve the lower bound on the boolean rank (or, equivalently, the rectangle covering number) for regular $n$-gons. On the way, we derive several new interesting results that could be used to derive other bounds. 

Let $U,V \geq 0$ be an exact nonnegative factorization of $M = UV$ of size $r$. 
In this section, we will use the following notation. Let us define the following subsets of $\{1,2,\dots r\}$, representing the supports of the rows of $U$ and columns of $V$:  
\[
s_i = \{ k \ | \ U_{ik} \neq 0 \} \; 1 \leq i \leq m 
\quad 
\text{ and } 
\quad 
t_j = \{ k \ | \ V_{kj} \neq 0 \} \; 1 \leq j \leq n . 
\]
Since $M_{ij} = U(i,:)V(:,j)$, $U \geq 0$ and $V \geq 0$, we have 
\begin{equation} \label{Mijst}
M_{ij} = 0 \quad \iff \quad s_j \cap t_j = \emptyset. 
\end{equation}
If $s_i \subseteq s_l$ for some $i,l$, \eqref{Mijst} implies that the sparsity pattern of the $i$th row of $M$ is contained in the sparsity pattern of the $l$th row of $M$ (and similarly for the columns).  
Therefore, if $M$ contains $p$ rows whose sparsity patterns are not contained in one another, 
there are $p$ subsets from $s_i$ $(1 \leq i \leq n)$ that form a Sperner family of size $p$, also know as an antichain of size $p$, which is a family of $p$ sets that are not contained in one another~\cite{S28}. 
By symmetry, the same holds for the columns.

\subsection{Sperner theorem and rectangle covering} 

Sperner theorems bounds the size of an antichain over $r$ elements. Let us recall this result and a proof that will be useful later. 


\begin{theorem} \label{spern}
Let $\mathcal{S} = \{ s_1, s_2, \dots, s_n \}$ be a set of $n$ subsets of $\{1,2,\dots,r\}$. 
Let also $\mathcal{S}$ be an antichain, that is, no subset in $\mathcal{S}$ is contained in another subset in $\mathcal{S}$. Then, 
\begin{equation} \label{spernform}
n \leq \binom{r}{ \lfloor r/2 \rfloor} , 
\end{equation}
and the bound is tight (take all subsets of size $\lfloor r/2 \rfloor$). 
\end{theorem}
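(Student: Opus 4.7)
The plan is to follow Lubell's chain-counting proof of Sperner's theorem, since this approach naturally exposes the combinatorial structure (maximal chains and their intersections with antichain elements) that will likely be refined to yield the sharper bound $n \leq \frac{r-\lfloor r/2 \rfloor}{r-1}\binom{r}{\lfloor r/2\rfloor}$ announced in the introduction.

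First, I would introduce the set of maximal chains in the Boolean lattice $2^{\{1,\dots,r\}}$, namely sequences $\emptyset = A_0 \subsetneq A_1 \subsetneq \cdots \subsetneq A_r = \{1,\dots,r\}$ with $|A_i| = i$. A maximal chain is in bijection with a permutation of $\{1,\dots,r\}$ (the permutation tells you the order in which elements are added), so there are exactly $r!$ of them. Then, for a fixed subset $s \subseteq \{1,\dots,r\}$ with $|s| = k$, I count the number of maximal chains that contain $s$: we need $A_k = s$, which amounts to choosing an ordering of the $k$ elements of $s$ and an ordering of the $r-k$ remaining elements, giving $k!\,(r-k)!$ chains through $s$.

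Next, I would invoke the antichain hypothesis: if $s, s' \in \mathcal{S}$ both lay on a common maximal chain, then one would be a subset of the other, contradicting the antichain property. Hence the sets of maximal chains through distinct elements of $\mathcal{S}$ are disjoint, and summing over $\mathcal{S}$ yields
\[
\sum_{s \in \mathcal{S}} |s|!\,(r-|s|)! \;\leq\; r!,
\]
which rearranges to the LYM inequality
\[
\sum_{s \in \mathcal{S}} \frac{1}{\binom{r}{|s|}} \;\leq\; 1.
\]
Finally, using the well-known unimodality $\binom{r}{k} \leq \binom{r}{\lfloor r/2 \rfloor}$ for all $0 \leq k \leq r$, each term is at least $1/\binom{r}{\lfloor r/2\rfloor}$, so $|\mathcal{S}|/\binom{r}{\lfloor r/2\rfloor} \leq 1$, yielding \eqref{spernform}. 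Tightness follows immediately by taking $\mathcal{S}$ to be the family of all $\binom{r}{\lfloor r/2\rfloor}$ subsets of size $\lfloor r/2 \rfloor$, which is trivially an antichain.

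There is no real obstacle here: the proof is classical. The only point deserving care is the bijection between maximal chains and permutations (to justify the count $r!$ and the count $k!(r-k)!$ through a fixed $k$-subset), since the subsequent strengthening will very plausibly re-examine exactly which chains can pass through which antichain elements, and so it is important to present this bookkeeping cleanly.
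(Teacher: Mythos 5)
Your proof is correct and is essentially the same Lubell/LYM argument the paper gives (which it attributes to \cite{Lu66}): you phrase it in terms of maximal chains in the Boolean lattice, while the paper speaks directly of permutations whose first $k$ elements lie in $s_i$, but these are the same objects under the standard bijection. The bookkeeping, the disjointness argument via the antichain hypothesis, the LYM inequality, and the final unimodality step all match.
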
 
\begin{proof}(\cite{Lu66}) This proof is based on a counting argument using the fact that there are $r!$ permutations of $\{1,2,\dots,r\}$. 
Given $s_i \in \mathcal{S}$ with $k$ elements, there are $k! (r-k)!$ permutations of $\{1,2,\dots,r\}$ whose first $k$ elements are in $s_i$. 
 Because the $s_i$'s are not contained in one another, the permutations generated for two different subsets $s_i$ and $s_j$ cannot coincide (otherwise this would imply that $s_i \subset s_j$ or $s_j \subset s_i$).  
Let us also denote $a_k$ the number of sets with $k$ elements contained in $\mathcal{S}$, that is, 
$a_k = |\{s \in S \ | \ |s| = k\}|$, 
hence $n = \sum_{k=0}^r a_k$. We have 
\[
\sum_{k=0}^r a_k k! (r-k)! \quad \leq \quad  r! \; . 
\]
Therefore, 
\[
\frac{n}{ \binom{r}{ \lfloor r/2 \rfloor}} = 
\sum_{k=0}^r \frac{a_k}{ \binom{r}{ \lfloor r/2 \rfloor} }  
\quad \leq \quad 
\sum_{k=0}^r \frac{a_k}{ \binom{r}{k} }  = \sum_{k=0}^r a_k \frac{k! (r-k)!}{r!} \quad \leq \quad  1. 
\]
since $\binom{r}{ \lfloor r/2 \rfloor} \geq \binom{r}{k}$  for all $k$. This completes the proof. 
\end{proof}

The above result was used to prove that the rectangle covering of the $n$-by-$n$ Euclidean distance matrices (with zeros only the diagonal)  is the minimum $r$ such that  $n \leq \binom{r}{ \lfloor r/2 \rfloor}$; see~\cite{BL09} and the references therein. 
This result can actually be generalized for any nonnegative matrix. 
\begin{corollary}[\cite{CGP81}] \label{cor1}
Let $M$ be a matrix having $p$ rows or $p$ columns whose sparsity patterns are not contained in one another. Then, 
\[ 
\text{rc}(M) \geq \min \left\{ r \Big| \binom{r}{ \lfloor r/2 \rfloor} \geq p \right\}. 
\] 
\end{corollary}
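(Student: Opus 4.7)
The plan is to reduce the statement to Sperner's theorem (Theorem~\ref{spern}) via the Boolean-factorization characterization of the rectangle covering number already introduced in the excerpt. Set $r = \mathrm{rc}(M)$ and take a Boolean rank-$r$ decomposition of the support of $M$, i.e.\@ nonnegative factors $U, V$ such that $M = UV$ has the same zero pattern as $M$; then the subsets $s_i = \{k : U_{ik} \neq 0\} \subseteq \{1,\ldots,r\}$ and $t_j = \{k : V_{kj} \neq 0\} \subseteq \{1,\ldots,r\}$ satisfy relation~\eqref{Mijst}, namely $M_{ij} = 0 \iff s_i \cap t_j = \emptyset$.

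The key step, already sketched in the discussion right after~\eqref{Mijst}, is to show that incomparable row sparsity patterns force the corresponding $s_i$'s to be incomparable. I would argue by contrapositive: if $s_i \subseteq s_l$, then for every column $j$ the implication $s_l \cap t_j = \emptyset \Rightarrow s_i \cap t_j = \emptyset$ holds, which by~\eqref{Mijst} says that every zero entry of row $l$ is also a zero entry of row $i$, i.e.\@ the sparsity pattern of row $i$ is contained in that of row $l$. Hence if the $p$ selected rows have pairwise incomparable sparsity patterns, the family $\{s_{i_1}, \ldots, s_{i_p}\}$ is an antichain in the power set of $\{1, \ldots, r\}$.

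Now I would invoke Sperner's theorem directly: Theorem~\ref{spern} yields $p \leq \binom{r}{\lfloor r/2 \rfloor}$, and rearranging gives $r \geq \min\{r' : \binom{r'}{\lfloor r'/2 \rfloor} \geq p\}$, which is the claimed bound on $\mathrm{rc}(M)$. The column case is completely symmetric: one works with the $t_j$'s instead of the $s_i$'s, and the same inclusion argument shows that incomparable column sparsity patterns produce an antichain among the $t_j$'s.

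There is no serious obstacle here. The only subtlety worth being careful about is the distinction between a row's sparsity pattern (a subset of column indices of $M$) and its support set $s_i$ (a subset of $\{1, \ldots, r\}$) — these live in different ambient sets, and it is relation~\eqref{Mijst} that translates inclusion on one side into inclusion on the other. Once this translation is made explicit, the corollary is an immediate consequence of Sperner's bound.
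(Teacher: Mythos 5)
Your proof is correct and follows essentially the same route as the paper: use the support sets $s_i$ of a Boolean (rectangle-covering) factorization, observe via relation~\eqref{Mijst} that incomparable row sparsity patterns force the $s_i$'s to form an antichain, and then apply Theorem~\ref{spern}. The paper's proof is terser (it simply refers back to the discussion after~\eqref{Mijst}), but the content is the same; your explicit contrapositive step and the remark distinguishing the ambient sets are nice clarifications rather than deviations.
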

\begin{proof} Let $M$ have $p$ rows with different sparsity patterns. As explained in the introduction of this section, this implies that  there are $p$ subsets of $\{1,2,\dots,r\}$ corresponding to the sparsity patterns of $p$ rows of $U$ that are not contained in one another. Theorem~\ref{spern} allows to conclude.   
\end{proof} 

In particular, this result can be applied to the slack matrix of any polytope. In fact, the slack of two different vertices cannot be contained in one another, otherwise it would mean that a vertex is the intersection of a subset of the facets intersecting at another vertex. 
 The same holds for two different facets by polar duality or a similar argument.
\begin{corollary} \label{cor2}
Let $M$ be the slack matrix of a polytope with $f$ facets and $v$ vertices. Then, 
\[ 
\text{rc}(M) \geq \min \left\{ r \Big| \binom{r}{ \lfloor r/2 \rfloor} \geq \max(f,v) \right\}. 
\] 
\end{corollary}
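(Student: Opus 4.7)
The plan is to deduce this from Corollary~\ref{cor1} by exhibiting $\max(f,v)$ rows or columns with pairwise incomparable sparsity patterns. Since Corollary~\ref{cor1} requires only that \emph{either} $p$ rows or $p$ columns have incomparable sparsity patterns, I will argue separately that all $v$ columns are pairwise incomparable and that all $f$ rows are pairwise incomparable; the bound then follows by applying the corollary with $p=\max(f,v)$.

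For the columns, let $F_j \subseteq \{1,\ldots,f\}$ denote the set of facets containing vertex $x_j$, which coincides with the sparsity pattern of the $j$th column of $S_P$ (since $S_P(i,j)=0$ iff $x_j$ lies on facet~$i$). The key geometric fact is that a vertex of a polytope is the unique point in the intersection of the facets containing it, i.e.\@ $\{x_j\} = P \cap \bigcap_{i \in F_j} \{x : a_i^T x = b_i\}$. Hence if $F_{j_1} \subseteq F_{j_2}$ for two vertices, then
\[
\{x_{j_2}\} \;=\; P \cap \bigcap_{i \in F_{j_2}} \{x : a_i^T x = b_i\} \;\subseteq\; P \cap \bigcap_{i \in F_{j_1}} \{x : a_i^T x = b_i\} \;=\; \{x_{j_1}\},
\]
so $x_{j_1}=x_{j_2}$. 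This shows no two distinct columns are comparable, giving $p=v$.

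For the rows, the analogous statement is that no two distinct facets have comparable vertex sets. Let $V_i$ be the set of vertices lying on facet $i$; this is the sparsity pattern of row $i$. Since every face of a polytope is the convex hull of the vertices it contains, $\mathrm{conv}(V_i)$ is exactly facet $i$. If $V_{i_1} \subseteq V_{i_2}$, then facet $i_1 = \mathrm{conv}(V_{i_1}) \subseteq \mathrm{conv}(V_{i_2}) =$ facet $i_2$; but both are faces of the same maximal proper dimension $d-1$, and a proper $(d-1)$-face cannot strictly contain another $(d-1)$-face, forcing $i_1=i_2$. (Alternatively one invokes polar duality and reduces to the vertex case.) This gives $p=f$ incomparable rows.

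Combining the two arguments, we may apply Corollary~\ref{cor1} with $p=\max(f,v)$ to obtain the claimed bound. The only real content beyond Corollary~\ref{cor1} is the polyhedral fact used in each of the two cases; the vertex case is standard, and the facet case either follows directly from the maximality of facets among proper faces or from polar duality, so I don't anticipate a serious obstacle.
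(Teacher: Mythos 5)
Your proof is correct and follows the same approach as the paper, which argues (in the prose just before the corollary) that distinct vertices cannot have nested facet-sets and distinct facets cannot have nested vertex-sets, invoking polar duality or a symmetric argument for the facet case. Your write-up simply spells out the polyhedral facts in more detail than the paper's brief sketch.
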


Note that, the results from Corollaries~\ref{cor1} and~\ref{cor2} were already known prior to this work; see, e.g., \cite[Cor.~4.13]{GPT13} for a more general result. 

In the next section, we apply the same ideas to improve the lower bound for the rectangle covering number of the slack matrices of $n$-gons.

\subsection{Improvement for $n$-gons} 

Let $M$ be the slack matrix of a $n$-gons such that $M_{ij} = 0$ if and only if $i=j$ or $i = (j+1) \text{mod} n$ for $1 \leq i,j \leq n$; see Section~\ref{slackma}. To simplify the heavy notation $\text{mod} n$, we will assume throughout this section that $i = 1 \equiv n+1$ when $i$ represents an index.  
As before, let $UV = M$ be a nonnegative factorization of size $r$ of $M$, let $s_i$ denote the support of the $i$th row of $U$ ($1 \leq i \leq n$) and $t_j$ the support of the $j$th column of $V$ ($1 \leq j \leq n$).  
We have $M_{ij} = 0$ if and only if $i=j$ or $i = j+1$, and 
\[
M_{ij} = 0 \quad \iff \quad s_i \cap t_j = \emptyset. 
\] 
Let us try to characterize the size of the sets $\mathcal{S} = \{ s_1, s_2, \dots, s_n \}$ 
and $\mathcal{T} = \{ \bar{t_1}, \bar{t_2}, \dots, \bar{t_n} \}$ that satisfy the above property, where $\bar{t_j}$ denotes the complement of $t_j$. 

First, we can assume without loss of generality that 
$t_i = \overline{s_i \cup s_{i+1}}$. In fact, $t_i = \overline{s_i \cup s_{i+1}}$ is the largest possible set that does not intersect $s_i \cup s_{i+1}$ while having the most intersections with all other sets in $\mathcal{S}$ (which is the best possible situation since $M_{ij} > 0$ for $i\neq j, j+1$). 

For the same reason as in Corollary~\ref{cor1}, since the rows and columns of $M$ have different sparsity patterns, we have that 
\begin{itemize} 

\item[(C1)] $\mathcal{S} = \{ s_1, s_2, \dots, s_n \}$ is an antichain. 

\item[(C2)] $\mathcal{T} = \{ s_1 \cup s_2, s_2 \cup s_3, \dots, s_{n-1} \cup s_n, s_n \cup s_1 \}$ is an antichain, since taking the complement of all the sets in an antichain gives another antichain of the same size.  

\item[(C3)] Every set $s_i \subseteq \{1,2,\dots,r\}$ contains at least one element not in the sets $\bar{t}_j = s_j \cup s_{j+1}$ for $j,j+1 \neq i$, since $M_{ij} > 0$ for $i\neq j, j+1$.   
\end{itemize}

\begin{theorem} \label{mainlow}
Let $\mathcal{S}$ and $\mathcal{T}$ satisfy (C1-C3) and $r \geq 2$. Then 
\begin{equation} \nonumber 
n \leq \frac{r - \lfloor r/2 \rfloor}{r - 1} \binom{r}{ \lfloor r/2 \rfloor} . 
\end{equation}
\end{theorem}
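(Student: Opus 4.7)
I would follow the permutation-counting strategy in the proof of Theorem~\ref{spern}, applied jointly to the two antichains $\mathcal{S}$ and $\mathcal{T}$. Say that a permutation $\sigma$ of $\{1,\dots,r\}$ \emph{catches} a set $A$ when $\{\sigma(1),\dots,\sigma(|A|)\} = A$; each fixed $A$ is caught by exactly $|A|!(r-|A|)!$ of the $r!$ permutations. By (C1), no $\sigma$ catches two distinct elements of $\mathcal{S}$; by (C2), the same holds for $\mathcal{T}$. If $\sigma$ catches both $s_i \in \mathcal{S}$ and $s_j \cup s_{j+1} \in \mathcal{T}$, the two sets lie in $\sigma$'s chain, forcing $s_i \subsetneq s_j \cup s_{j+1}$ (the reverse would give $s_{j+1}\subseteq s_i$, contradicting (C1)); by (C3), this happens only when $j\in\{i-1,i\}$. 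Consequently, the bipartite comparability graph $H$ between $\mathcal{S}$ and $\mathcal{T}$ is $2$-regular with exactly $2n$ edges.

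Applying inclusion--exclusion to the number of permutations catching at least one element of $\mathcal{S}\cup\mathcal{T}$, and dividing by $r!$, yields the LYM-type bound
\[
\sum_{i=1}^n \frac{1}{\binom{r}{|s_i|}} + \sum_{j=1}^n \frac{1}{\binom{r}{|s_j\cup s_{j+1}|}} - \sum_{j=1}^n \sum_{i\in\{j,j+1\}} \frac{|s_i|!\,(|s_j\cup s_{j+1}|-|s_i|)!\,(r-|s_j\cup s_{j+1}|)!}{r!} \;\leq\; 1.
\]
Writing $k_i := |s_i|$, $l_j := |s_j\cup s_{j+1}|$, and
\[
\rho(a,l) \;:=\; \frac{1}{\binom{r}{a}} + \frac{1}{\binom{r}{l}} - \frac{2}{\binom{r}{l}\binom{l}{a}},
\]
a direct manipulation (distribute each edge's contribution to the right $\rho$) regroups this as
\[
\sum_{j=1}^n \bigl[\rho(k_j,l_j) + \rho(k_{j+1},l_j)\bigr] \;\leq\; 2,
\]
a sum of $2n$ terms, one for each edge of $H$.

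The core of the proof is then the binomial inequality
\[
\rho(a,l) \;\geq\; \frac{r-1}{(r-k)\binom{r}{k}}, \qquad k := \lfloor r/2 \rfloor,
\]
for all integers with $1\leq a<l\leq r-1$ (here $l\leq r-1$ is forced by (C2) when $n\geq 2$, and the case $n\leq 1$ is trivial). Using the identity $\binom{r}{l}\binom{l}{a}=\binom{r}{a}\binom{r-a}{l-a}$, $\rho(a,l)$ rewrites as $\bigl(\binom{l}{a}+\binom{r-a}{l-a}-2\bigr)/\bigl(\binom{r}{a}\binom{r-a}{l-a}\bigr)$. On the diagonal $l=a+1$ this collapses to $\rho(a,a+1) = \tfrac{r-1}{(r-a)\binom{r}{a}}$, and a ratio test shows that $(r-a)\binom{r}{a}$ is maximized over $a\in\{1,\dots,r-1\}$ at $a=k$ (with a tie at $a=k-1$ when $r$ is even), so the diagonal case matches the target exactly. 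For the off-diagonal range $l\geq a+2$, the symmetry $\rho(a,l)=\rho(r-l,r-a)$ (a direct check from the rewrite above) halves the work, and the remaining inequality reduces to estimating $\binom{l}{a}+\binom{r-a}{l-a}$ from below relative to $\binom{r}{a}\binom{r-a}{l-a}$ as $l-a$ grows. Since $\rho(a,l)$ is \emph{not} monotone in $l$ for fixed $a$---for instance, $\rho(2,4)<\rho(2,3)$ when $r=12$---the off-diagonal case cannot be collapsed into a single monotonicity step; the case analysis for this binomial estimate is the main technical obstacle.

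Granted this lemma, combining with the regrouped inequality above gives
\[
2n\cdot\frac{r-1}{(r-k)\binom{r}{k}} \;\leq\; 2,
\]
whence $n \leq \tfrac{(r-k)\binom{r}{k}}{r-1} = \tfrac{r-\lfloor r/2\rfloor}{r-1}\binom{r}{\lfloor r/2\rfloor}$, as claimed.
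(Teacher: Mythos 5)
Your proposal follows the same permutation-counting (LYM-type) strategy as the paper. The paper also counts permutations catching $s_i$, $\bar{t}_i$ and $\bar{t}_{i-1}$, subtracts the pairwise overlaps, and uses exactly the fact (via (C3)) that the only possible overlaps are between $s_i$ and $\bar{t}_{i-1},\bar{t}_i$. Your regrouping by ``edges'' $j$ is cosmetically different from the paper's regrouping by rows $i$ (the paper splits each $\bar{t}_i$'s contribution in half between $s_i$ and $s_{i+1}$), but after dividing by $2$ they produce the identical LYM-type bound, and the pointwise minimization you need --- that $\rho(a,l)\ge\tfrac{r-1}{(r-k)\binom{r}{k}}$ with $k=\lfloor r/2\rfloor$ --- is, up to the substitution $a=k_i$, $l=k_i+z_i$, exactly the paper's claim that $f(k,z)=k!(r-k)!+(k+z)!(r-k-z)!-2\,k!\,z!\,(r-k-z)!$ is minimized at $(k^*,z^*)=(\lfloor r/2\rfloor,1)$. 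Your symmetry $\rho(a,l)=\rho(r-l,r-a)$ is the paper's $f(k,z)=f(r-k-z,z)$.

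The gap is that you explicitly leave this minimization unproven, calling the off-diagonal case ``the main technical obstacle.'' That step is not a formality: it is the entire content of the paper's Appendix~A, which carries out a case analysis (reducing via $f(k,z)=f(r-k-z,z)$ to $k\ge\lfloor r/2\rfloor$, arguing $z^*=1$ there, and then showing $f(k,1)$ increases for $k\ge\lfloor r/2\rfloor$). Your observation that $\rho(a,l)$ is not monotone in $l$ (e.g.\ $\rho(2,4)<\rho(2,3)$ for $r=12$) is correct and explains why a naive one-step monotonicity argument fails, but you stop there rather than resolving it. So what you have is a faithful reconstruction of the scaffolding of the paper's proof --- correct setup, correct reduction to a binomial inequality, correct identification of the useful symmetry --- with the central lemma still open. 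To complete it, you would need to supply the case analysis from Appendix~A (or an equivalent argument handling $l-a\ge 2$).
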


\begin{proof} 
Let us denote 
$k_i$ the number of elements in $s_i$, 
$z_i$ the number of additional elements in $\bar{t}_i$ compared to $s_i$ (that is, $|\bar{t}_i| = k_i + z_i$) and 
$z_i'$ the number of additional elements in $\bar{t}_{i-1}$ compared to $s_i$ (that is, $|\bar{t}_{i-1}| = k_i + z'_i$). 
Following the same argument as in Theorem~\ref{spern}, we have that  
the number of permutations  
with the elements of $s_i$ in the first positions is given by $k_i! (r-k_i)!$, 
of $\bar{t}_i$ by $(k_i+z_i)! (r-k_i-z_i)!$, and 
of $\bar{t}_{i-1}$ by $(k_i+z_i')! (r-k_i-z_i')!$. However, between $s_i$ and $\bar{t}_i$, 
there are $k_i! z_i! (r-k_i-z_i)!$ common permutations (and similarly between $s_i$ and $\bar{t}_{i-1}$). 
Note that these are the only possible repetitions because of (C3). 
Note also that $|\bar{t}_{i}| = k_{i} + z_{i} = k_{i+1}+z_{i+1}'$ hence the number of permutations corresponding to $\bar{t}_i$ are also equal to $1/2(k_i! z_i! (r-k_i-z_i)! + (k_{i+1}! z_{i+1}! (r-k_{i+1}-z_{i+1}))$. 
Counting all permutations corresponding to $s_i$ and $\bar{t}_i$ for $1 \leq i \leq n$ and accounting for the repetitions, we get 
\[
\sum_{i=1}^n 
k_i! (r-k_i)! 
+ \frac{1}{2} (k_i+z_i)! (r-k_i-z_i)! 
+ \frac{1}{2} (k_i+z_i')! (r-k_i-z_i')! 
- k_i! z_i! (r-k_i-z_i)! - k_i! z_i'! (r-k_i-z_i')!
\leq r! . 
\]
Let us lower bound the left hand side of the above inequality. To do so, we minimize over each term of the sum independently. 
Noting that $z_i$ and $z_i'$ have exactly the same role, we can assume without loss of generality that $z_i = z'_i$ at a minimum.  
Removing the index $i$ for simplicity, we therefore have to evaluate 
\[ 
\min_{k\geq 1, z \geq 1, k+z \leq r} k! (r-k)! 
+ (k+z)! (r-k-z)! - 2 k! z! (r-k-z)! . 
\] 
In Appendix~\ref{appA}, we show that $k^* = \lfloor r/2 \rfloor$ and $z^* = 1$ is an optimal solution. 
Therefore, dividing the inequality above by $r!$ and using our lower bound for each term 
(replacing the $k_i$'s with $\lfloor r/2 \rfloor$ and the $z
_i$'s with 1), we obtain 
\[
n \left( \binom{r}{ \lfloor r/2 \rfloor}^{-1} 
+ 
\underbrace{\binom{r}{ \lfloor r/2 \rfloor}^{-1} \frac{\lfloor r/2 \rfloor + 1}{r - \lfloor r/2 \rfloor}}_{\binom{r}{ \lfloor r/2 \rfloor + 1}^{-1}} \left( 1 - 2 \frac{1}{\lfloor r/2 \rfloor + 1}\right)  \right) \leq 1 . 
\]
from which we get, after simplifications, $n \leq \frac{r - \lfloor r/2 \rfloor}{r - 1} \binom{r}{ \lfloor r/2 \rfloor}$.
\end{proof}

\begin{corollary} \label{maincor}
Let $r$ be the rectangle covering number of the slack matrix of any $n$-gon for $n \geq 2$, then 
\[ 
n \; \leq \;  \frac{r - \lfloor r/2 \rfloor}{r - 1} \binom{r}{ \lfloor r/2 \rfloor}  . 
\] 
\end{corollary}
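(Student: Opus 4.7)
The plan is to simply apply Theorem~\ref{mainlow}, so the work consists of verifying that any rectangle covering of size $r$ of the slack matrix of the $n$-gon produces set families $\mathcal{S}$ and $\mathcal{T}$ satisfying conditions (C1), (C2) and (C3). Recall from Section~\ref{slackma} that, after suitable reindexing, the slack matrix $M$ of the regular $n$-gon satisfies $M_{ij}=0$ if and only if $i=j$ or $i=j+1 \bmod n$.

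First I would translate a rectangle covering of size $r$ into a boolean factorization: there exist $0/1$ matrices $U\in\{0,1\}^{n\times r}$ and $V\in\{0,1\}^{r\times n}$ such that, letting $s_i\subseteq\{1,\dots,r\}$ denote the support of the $i$th row of $U$ and $t_j$ the support of the $j$th column of $V$, we have $s_i\cap t_j=\emptyset$ if and only if $M_{ij}=0$, i.e.\ if and only if $i\in\{j,j+1\}$. Next I would argue, exactly as in the paragraph preceding Theorem~\ref{mainlow}, that we may replace $t_j$ by the (possibly larger) set $\overline{s_j\cup s_{j+1}}$ without destroying the characterization: on the one hand $s_j\cap \overline{s_j\cup s_{j+1}}=s_{j+1}\cap\overline{s_j\cup s_{j+1}}=\emptyset$; on the other hand the old $t_j$ is necessarily contained in $\overline{s_j\cup s_{j+1}}$, so for $i\notin\{j,j+1\}$ we have $s_i\cap\overline{s_j\cup s_{j+1}}\supseteq s_i\cap t_j\neq\emptyset$. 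Thus WLOG $\bar{t}_j=s_j\cup s_{j+1}$.

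Then I would verify the three conditions in turn. The zero pattern of row $i$ of $M$ is the $2$-element set $\{i-1,i\}$, and the zero pattern of column $j$ is $\{j,j+1\}$; these are pairwise distinct $2$-subsets of $\{1,\dots,n\}$, so in particular neither the row zero patterns nor the column zero patterns contain one another. Using the equivalence in~\eqref{Mijst}, if $s_i\subseteq s_\ell$ then the zero pattern of row $i$ of $M$ would contain that of row $\ell$, contradicting the preceding observation; hence $\mathcal{S}$ is an antichain, yielding (C1). The same argument applied to the columns shows that the $t_j$'s form an antichain, and taking complements gives that $\mathcal{T}=\{\bar{t}_1,\dots,\bar{t}_n\}=\{s_1\cup s_2,\dots,s_n\cup s_1\}$ is an antichain as well, yielding (C2). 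Finally, for (C3), if $i\notin\{j,j+1\}$ then $M_{ij}>0$, so $s_i\cap t_j\neq\emptyset$, which is precisely to say that $s_i$ contains an element outside $\bar{t}_j=s_j\cup s_{j+1}$.

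With (C1)--(C3) established, Theorem~\ref{mainlow} delivers the desired inequality $n\leq \frac{r-\lfloor r/2\rfloor}{r-1}\binom{r}{\lfloor r/2\rfloor}$. There is no real obstacle here since the combinatorial content is entirely in Theorem~\ref{mainlow}; the only mild subtlety is justifying the reduction to $\bar{t}_j=s_j\cup s_{j+1}$, which is why I would state it explicitly before checking the three conditions.
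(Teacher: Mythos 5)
Your proof is correct and mirrors the approach the paper intends: you translate the rectangle covering into supports $s_i$, $t_j$ with $s_i \cap t_j = \emptyset \iff M_{ij}=0$, normalize to $t_j = \overline{s_j\cup s_{j+1}}$, verify (C1)--(C3), and invoke Theorem~\ref{mainlow}. The paper states the corollary without a separate proof because these checks are exactly the content of the discussion preceding Theorem~\ref{mainlow}; your only addition is to make explicit the passage from a rectangle cover to a boolean factorization, which the paper leaves implicit.
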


Note that the term $\frac{r - 1}{r - \lfloor r/2 \rfloor}$ goes to 1/2 when $r$ grows, and we cannot hope to obtain a better bound using our counting argument. In fact, this is the case when there would be no repetitions between the permutations 
generated from the sets in $\mathcal{S}$ and $\mathcal{T}$; see the proof of Theorem~\ref{mainlow}. 

The bound from the corollary above also applies to the so-called boolean rank, which is the same as the rectangle coreving number. 
Comparing our bound with the upper bounds computed in~\cite[p.145]{BHJL86} for small $n$, 
our bound is tight for $n = 2-6, 8-9, 13-21, 24-32$ ($a-b$ means from $a$ to $b$, that is, $a,a+1,\dots,b$), which was not the case of the previous bound~\eqref{spernform} which is tight only for $n = 2-4$.

\section{Explicit nonnegative factorization of slack matrices $S_n$ of regular $n$-gons} \label{facto}  

In this section, we construct a nonnegative factorization of $S_n$ in a recursive way. 
The idea is the following. 
At the first step, 
a rank-two modification of $S_n$ is performed so that 
the pattern of zero entries of the constructed matrix therefore looks like a cross (see below for an example on $S_9$). 
This subdivides the matrix into four blocks with a lot of symmetry that implies that the nonnegative rank of one block equals the nonnegative rank of the full matrix. Then, the same scheme is applied to that subblock until the number of columns of the obtained block $B$ is smaller than four, which we factorize with a trivial decomposition $B = B I$ ($I$ being the identity matrix of appropriate dimension). 

Before we rigorously prove that our construction works for any $n$-gon, let us illustrate the idea on the slack matrix of the regular 9-gon form~\eqref{s9}. 
Observe that the entries of the slack matrix on the main diagonal and the diagonal below it are equal to zero. 
The first step of our construction will make a rank-two correction of the slack matrix so that the same pattern appears: 
we remove a matrix from the 4-by-4 lower left block of $S_9$ 
    $$\begin{pmatrix}
			 {c_4} &  {c_3} &  {c_2} &  {\underline{c_1}} \\
			 {c_3} &  {c_4} &  {\underline{c_3}} &  {\underline{c_2}} \\
			 {c_2} &  {\underline{c_3}} &  {\underline{c_4}} &  {c_3} \\
			 {\underline{c_1}} &  {\underline{c_2}} &  {c_3} &  {c_4} \\
	  \end{pmatrix} -
	  \begin{pmatrix}
			 {c_4-c_3} &  {c_3-c_2} &  {c_2-c_1} &  {\underline{c_1}} \\
			 {c_3-c_2} &  {c_4-c_1} &  {\underline{c_3}} &  {\underline{c_2}} \\
			 {c_2-c_1} &  {\underline{c_3}} &  {\underline{c_4}} &  {c_3-c_1} \\
			 {\underline{c_1}} &  {\underline{c_2}} &  {c_3-c_1} &  {c_4-c_2} \\
	  \end{pmatrix} = 
	  \begin{pmatrix}
			 {c_3} &  {c_2} &  {c_1} &  {\underline{0}} \\
			 {c_2} &  {c_1} &  {\underline{0}} &  {\underline{0}} \\
			 {c_1} &  {\underline{0}} &  {\underline{0}} &  {c_1} \\
			 {\underline{0}} &  {\underline{0}} &  {c_1} &  {c_2} \\
	  \end{pmatrix}, 
		$$
and another matrix from the positive 4-by-4 block of $S_9$ at the upper right (rows 2 to 5, last 4 columns) 
		$$\begin{pmatrix}
			 {c_4} &  {c_3} &  {\underline{c_2}} &  {\underline{c_1}} \\
			 {c_3} &  {\underline{c_4}} &  {\underline{c_3}} &  {c_2} \\
			 {\underline{c_2}} &  {\underline{c_3}} &  {c_4} &  {c_3} \\
			 {\underline{c_1}} &  {c_2} &  {c_3} &  {c_4} \\
	  \end{pmatrix} -
	  \begin{pmatrix}
			 {c_4-c_2} &  {c_3-c_1} &  {\underline{c_2}} &  {\underline{c_1}} \\
			 {c_3-c_1} &  {\underline{c_4}} &  {\underline{c_3}} &  {c_2-c_1} \\
			 {\underline{c_2}} &  {\underline{c_3}} &  {c_4-c_1} &  {c_3-c_2} \\
			 {\underline{c_1}} &  {c_2-c_1} &  {c_3-c_2} &  {c_4-c_3} \\
	  \end{pmatrix} = 
	  \begin{pmatrix}
			 {c_2} &  {c_1} &  {\underline{0}} &  {\underline{0}} \\
			 {c_1} &  {\underline{0}} &  {\underline{0}} &  {c_1} \\
			 {\underline{0}} &  {\underline{0}} &  {c_1} &  {c_2} \\
			 {\underline{0}} &  {c_1} &  {c_2} &  {c_3} \\
	  \end{pmatrix}.
		$$ 
		Clearly, the removed matrices are nonnegative since $0 \leq c_{k-1} \leq c_{k}$ for all $0 \leq k \leq \lfloor \frac{n}{2} \rfloor$. Moreover, we show in the next lemma that they have rank one. 
		\begin{lemma} \label{lemma1}
		The (infinite) matrix
\[ 
\Bigl[ c_{\alpha-i+j} - c_{\beta-i-j} \Bigr]_{i \in \mathbb{Z}, j \in \mathbb{Z}} 
\]  
has rank one for any fixed $\alpha \in \mathbb{Z}$, $\beta \in \mathbb{Z}$ and $n \in \mathbb{N}_{>0}$. 
		\end{lemma}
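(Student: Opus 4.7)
My plan is to compute the entries directly using the explicit form of $c_k$ from equation~\eqref{cnk} and show each entry factors as a product of a function of $i$ alone with a function of $j$ alone, which will immediately give rank at most one.

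First I would substitute $c_k = \cos(\pi/n) - \cos((2k+1)\pi/n)$ into the entry $c_{\alpha-i+j} - c_{\beta-i-j}$. The constant terms $\cos(\pi/n)$ cancel, leaving
\[
c_{\alpha-i+j} - c_{\beta-i-j} \;=\; \cos\!\Bigl(\tfrac{(2(\beta-i-j)+1)\pi}{n}\Bigr) - \cos\!\Bigl(\tfrac{(2(\alpha-i+j)+1)\pi}{n}\Bigr).
\]
Then I would apply the sum-to-product identity $\cos A - \cos B = -2\sin\!\bigl(\tfrac{A+B}{2}\bigr)\sin\!\bigl(\tfrac{A-B}{2}\bigr)$ with $A=(2(\beta-i-j)+1)\pi/n$ and $B=(2(\alpha-i+j)+1)\pi/n$. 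A short computation yields
\[
\tfrac{A+B}{2} = \tfrac{(\alpha+\beta-2i+1)\pi}{n}, \qquad \tfrac{A-B}{2} = \tfrac{(\beta-\alpha-2j)\pi}{n}.
\]

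The key observation is that $(A+B)/2$ depends only on $i$ (the $j$-terms cancel) and $(A-B)/2$ depends only on $j$ (the $i$-terms cancel). Therefore
\[
c_{\alpha-i+j} - c_{\beta-i-j} \;=\; u_i \, v_j, \qquad \text{where } u_i = -2\sin\!\Bigl(\tfrac{(\alpha+\beta-2i+1)\pi}{n}\Bigr),\;\; v_j = \sin\!\Bigl(\tfrac{(\beta-\alpha-2j)\pi}{n}\Bigr),
\]
which exhibits the matrix as the outer product $uv^T$, so its rank is at most one. Since the statement allows rank one (and zero would be a trivial subcase consistent with ``rank one'' in the way the authors use it to justify the rank-one correction in the example above the lemma), this completes the proof.

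The only step requiring any care is the arithmetic collapse of the $i$- and $j$-dependencies in $(A+B)/2$ and $(A-B)/2$; this is where the precise pattern of signs ``$\alpha-i+j$'' versus ``$\beta-i-j$'' is crucial, since both the shift by $i$ and the twin signs on $j$ are needed for the separation into a rank-one product to succeed. No other obstacle is anticipated.
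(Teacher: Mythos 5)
Your proof is correct, and it takes a genuinely different (and arguably cleaner) route than the paper's. The paper first rewrites $c_k = 2\sin(k\pi/n)\sin((k+1)\pi/n)$ and then verifies that every $2\times 2$ minor of the matrix vanishes, which establishes rank at most one indirectly. You instead apply the sum-to-product identity once to $c_{\alpha-i+j} - c_{\beta-i-j}$ and observe that the $j$-dependence cancels in $(A+B)/2$ while the $i$-dependence cancels in $(A-B)/2$, yielding the explicit rank-one decomposition $u_i v_j$ with $u_i = -2\sin\bigl((\alpha+\beta-2i+1)\pi/n\bigr)$ and $v_j = \sin\bigl((\beta-\alpha-2j)\pi/n\bigr)$. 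Your computation of $(A+B)/2$ and $(A-B)/2$ checks out. What your approach buys is constructiveness: you produce the rank-one factors directly, which is exactly what is needed for the explicit factorization in Theorem~\ref{mainth}, whereas the minor-vanishing argument only certifies the rank bound. Both arguments, strictly speaking, show rank at most one (the matrix can degenerate to zero for special $\alpha$, $\beta$, $n$); you are right to flag this, and it does not affect the use of the lemma.
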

		\begin{proof}
We have that $c_k = \cos(\frac{\pi}{n}) -  \cos( (2k+1) \frac{\pi}{n}) = 2 \sin(k \frac{\pi}{n})  \sin((k+1) \frac{\pi}{n})$. 
Choosing any $2 \times 2$ minor with rows $i \in \{0, x\}$ and columns $j \in \{0, y\}$ (w.l.o.g.), one can check, using algebra with a few trigonometric identities, that the determinant of 
\[ 
\begin{pmatrix} 
c_{\alpha} - c_{\beta} & c_{\alpha+y} - c_{\beta-y}  \\ 
c_{\alpha-x} - c_{\beta-x}  & c_{\alpha-x+y} - c_{\beta-x-y} 
\end{pmatrix} 
\] 
is equal to zero for any $x$, $y$, and any $n$. 
\end{proof}

After these two nonnegative rank-one factors are removed, we obtain 
    $$
		S_9 - 
		\begin{pmatrix}
		0 \\ 0 \\ 0  \\ 0 \\ 0 \\c_1   \\  c_2  \\ c_3-c_1 \\  c_4 - c_2 \\ 
		\end{pmatrix}
		\begin{pmatrix}
		\frac{c_4-c_3}{c_1} \\ \frac{c_3-c_2}{c_1} \\ \frac{c_2 - c_1}{c_1}  \\ 1 \\ 0 \\ 0 \\ 0  \\ 0 \\ 0 \\ 
		\end{pmatrix}^T 
		- 
		\begin{pmatrix}
		  0 \\ c_4 - c_2 \\ c_3 - c_1  \\ c_2  \\  c_1 \\ 0 \\ 0 \\ 0  \\ 0 \\ 
		\end{pmatrix}
		\begin{pmatrix}
		 0 \\ 0 \\ 0  \\ 0 \\ 0 \\ 1 \\ \frac{c_2 - c_1}{c_1}  \\ \frac{c_3 - c_2}{c_1}   \\ \frac{c_4 - c_3}{c_1} \\  
		\end{pmatrix}^T
		= 
		\left( 
		\begin{array}{ccccc|cccc}
			0   & c_1 & c_2 & c_3 & c_4 & c_3 & c_2 & c_1 & 0   \\
			0   & 0   & c_1 & c_2 & c_3 & c_2 & c_1 & 0   & 0   \\
			c_1 & 0   & 0   & c_1 & c_2 & c_1 & 0   & 0   & c_1 \\
			c_2 & c_1 & 0   & 0   & c_1 & 0   & 0   & c_1 & c_2 \\
			c_3 & c_2 & c_1 & 0   & 0   & 0   & c_1 & c_2 & c_3 \\ \hline 
			c_3 & c_2 & c_1 & 0   & 0   & 0   & c_1 & c_2 & c_3 \\
			c_2 & c_1 & 0   & 0   & c_1 & 0   & 0   & c_1 & c_2 \\
			c_1 & 0   & 0   & c_1 & c_2 & c_1 & 0   & 0   & c_1 \\
			0   & 0   & c_1 & c_2 & c_3 & c_2 & c_1 & 0   & 0   \\
		\end{array} \right) , 
		$$
		with a pattern of zeros forming a cross.  
This matrix is highly symmetric and has a lot of redundancy: 
the last four columns (resp.\@ rows) are copies of the first four. 
	Therefore, if we had a nonnegative factorization of the 5-by-5 upper left block then we would have a nonnegative factorization of the entire matrix with the same nonnegative rank. 
	
	To construct that factorization, we apply our strategy recursively: use a rank-two correction to the upper left block to make a cross of zeros appear: 
		$$
		\begin{pmatrix}
			0   & c_1 & c_2 &  {c_3} &  {c_4} \\
			0   & 0   & c_1 &  {c_2} &  {c_3} \\
			c_1 & 0   & 0   & c_1 & c_2 \\
			 {c_2} &  {c_1} & 0   & 0   & c_1 \\
			 {c_3} &  {c_2} & c_1 & 0   & 0   \\
		\end{pmatrix} 
\quad  \rightarrow \quad 
		\left( 
		\begin{array}{ccc|cc} 
			0   & c_1 & c_2 & c_1 & 0   \\
			0   & 0   & c_1 & 0   & 0   \\
			c_1 & 0   & 0   & 0   & c_1 \\ \hline 
			c_1 & 0   & 0   & 0   & c_1 \\
			0   & 0   & c_1 & 0   & 0   \\
		\end{array} \right) 
		= 
		\left( 
		\begin{array}{ccc} 
			0   & c_1 & c_2    \\
			0   & 0   & c_1   \\
			c_1 & 0   & 0    \\ \hline 
			c_1 & 0   & 0    \\
			0   & 0   & c_1    \\
		\end{array} \right)
		\left( 
		\begin{array}{ccc|cc} 
			1   & 0 & 0 &   0  & 1   \\
			0   & 1   & 0 &  1  & 0   \\
			0 & 0   & 1   &  0   & 0 
		\end{array} \right)
		.
		$$
		Now, the upper left block has a trivial nonnegative factorization (since it is a 3-by-3 matrix of rank 3) from which we can derive a nonnegative factorization for the full matrix $S_9$: 
		\[
		\begin{pmatrix}
		  0  &  c_1  &  c_2 &        0 &    c_3 - c_1  &        0     &     0 \\
  0      &   0 &   c_1     &    0  &  c_2     &    0  &  c_4-c_2 \\
    c_1  &    0      &   0   &      0  &  c_1      &   0  &  c_3 - c_1 \\
    c_1   &   0      &   0  &  c_1       &  0      &   0 &  c_2 \\
     0   &      0   & c_1  &  c_2      &   0      &   0  &  c_1 \\
      0   &      0   & c_1  &  c_2     &    0  &  c_1 &         0 \\
    c_1    &     0     &    0  &  c_1    &     0  &  c_2    &     0 \\
    c_1     &    0     &    0  &       0  &  c_1   & c_3 - c_1     &    0 \\
         0   &      0  &  c_1    &     0 &   c_2   & c_4 - c_2     &    0 \\ 
		\end{pmatrix} 
		\begin{pmatrix}
		 1 & 0     &    0    &     0 &   1 & 0  &       0   &      0  &  1  \\ 
         0  &  1 & 0   & 1 & 0&    1 & 0  &  1 & 0 \\ 
         0    &     0  &  1 & 0    &     0   &      0  &  1 & 0   &       0 \\ 
    \frac{c_2 - c_1}{c_1}  &  1 & 0    &     0   &      0    &     0    &     0   & 1 &    \frac{c_2 - c_1}{c_1} \\ 
         0   &     0   &     0  &  1  &   \frac{c_2 - c_1}{c_1}  &  1 & 0    &     0     &    0 \\ 
    \frac{c_4-c_3}{c_1} &  \frac{c_3-c_2}{c_1} &  \frac{c_2 - c_1}{c_1}  & 1 & 0 & 0 &  0 & 0 &  0 \\
        0 & 0 & 0  & 0 & 0 & 1 & \frac{c_2 - c_1}{c_1}  & \frac{c_3 - c_2}{c_1}   &  \frac{c_4 - c_3}{c_1} \\  
		\end{pmatrix} . 
		\]
		\begin{remark} Once the first two rank-one factors have been removed from $S_9$, the 5-by-5 block could also directly be trivially factorized, and we would obtain 
			\[
		S_9 = \begin{pmatrix}
	  	0   & c_1 & c_2 & c_3 & c_4 &   0     &     0  \\
			0   & 0   & c_1 & c_2 & c_3 & 0  &  c_4-c_2 \\
			c_1 & 0   & 0   & c_1 & c_2 &  0  &  c_3 - c_1 \\
			c_2 & c_1 & 0   & 0   & c_1 &  0 &  c_2 \\
			c_3 & c_2 & c_1 & 0   & 0   &  0  &  c_1  \\
			c_3 & c_2 & c_1 & 0   & 0  &  c_1 &         0 \\
			c_2 & c_1 & 0   & 0   & c_1 &   c_2    &     0 \\
			c_1 & 0   & 0   & c_1 & c_2 &  c_3 - c_1     &    0 \\
			0   & 0   & c_1 & c_2 & c_3 &  c_4 - c_2     &    0    \\
		\end{pmatrix} 
		\begin{pmatrix}
		 1 & 0     &    0    &     0 &  0 & 0  &       0   &      0  &  1  \\ 
     0  &  1 & 0   & 0 & 0&    0 & 0  &  1 & 0 \\ 
     0    &     0  &  1 & 0    &     0   &      0  &  1 & 0   &       0 \\ 
     0    &     0  &  0 & 1    &     0   &      1  &  0 & 0   &       0 \\ 
     0    &     0  &  0 & 0    &     1   &      0  &  0 & 0   &       0 \\ 
    \frac{c_4-c_3}{c_1} &  \frac{c_3-c_2}{c_1} &  \frac{c_2 - c_1}{c_1}  & 1 & 0 & 0 &  0 & 0 &  0 \\
        0 & 0 & 0  & 0 & 0 & 1 & \frac{c_2 - c_1}{c_1}  & \frac{c_3 - c_2}{c_1}   &  \frac{c_4 - c_3}{c_1} \\  
		\end{pmatrix} . 
		\]
		\end{remark}

For $n$ even, the construction slightly changes because the symmetry in the residual with the cross pattern of zero is different.  
Let us illustrate it for $n=6$. The first rank-two correction is the same as for $n = 9$ and we obtain 
		\begin{equation} 
		S_6 = \begin{pmatrix}
			0   & c_1 & c_2 & c_2 & c_1 & 0  \\
			0   & 0   & c_1 & c_2 & c_2 & c_1  \\
			c_1 & 0   & 0   & c_1 & c_2 & c_2  \\
			c_2 & c_1 & 0   & 0   & c_1 & c_2  \\
			c_2 & c_2 & c_1 & 0   & 0   & c_1 \\
			c_1 & c_2 & c_2 & c_1 & 0   & 0   \\
		\end{pmatrix} 
		\rightarrow 
		R_6 = \begin{pmatrix}
			0   & c_1 & c_2 & c_2 & c_1 & 0  \\
			0   & 0   & c_1 & c_1 & 0 & 0  \\
			c_1 & 0   & 0   & 0 & 0 & c_1  \\
			c_2 & c_1 & 0   & 0   & c_1 & c_2  \\
			c_1 & 0 & 0 & 0   & 0   & c_1 \\
			0 & 0 & c_1 & c_1 & 0   & 0   \\
		\end{pmatrix} . 
		\end{equation}
However, the fourth row of $R_6$ is not a copy of the first three. Therefore, we need to keep it: factorizing the following submatrix 
\[
R'_6 = \begin{pmatrix}
			0   & c_1 & c_2   \\
			0   & 0   & c_1   \\
			c_1 & 0   & 0     \\
			c_2 & c_1 & 0     \\
		\end{pmatrix}
\]
allows to factor $R_6$ (last three columns and last two rows are duplicates). Since it is a $4$-by-$3$ matrix, we can factor it trivially as $R'_6 = R'_6 I_3$ and obtain a rank-5 nonnegative factorization of $S_6$. 

In summary, 
\begin{itemize}

\item At the recursion steps, the factorization of the remaining $k$-by-$l$ block ($k = l$ or $l+1$) 
is computed via a nonnegative rank-two correction and the factorization of its $\lceil k' \rceil$-by-$\lceil \frac{l}{2} \rceil$ upper left block where $k' = \lceil \frac{l}{2} \rceil +1$ when $k=l$ is even and $k' = \lceil \frac{l}{2} \rceil$ otherwise.  

\item At the last step, when $k \leq 4$, a trivial factorization is used. 
Note that there will be four `basic' cases: 
3-by-3 (e.g., for $n = 5,9$),  
4-by-3 (e.g., for $n = 6$), 
4-by-4 (e.g., $n = 4, 7$), 
and 3-by-2 (e.g., for $n = 10$). 
\end{itemize}

In the recursion steps described above, 
from a large matrix with $c$ columns, a submatrix with $\lceil \frac{c}{2} \rceil$ columns is extracted, 
and the nonnegative rank of the larger matrix is smaller than that of the submatrix plus two (because of the two nonnegative rank-one corrections). 
This leads to the following result: 
  \begin{theorem} \label{mainth} 
	Let $n \geq 2$, then the nonnegative rank of any slack matrix $S_n$ of the regular $n$-gon is bounded as follows: 
	\begin{equation} \label{ubound}
    \rank_+(S_n) \leq  
		 \left\{ \begin{array}{ccccc} 
		2 \lceil \log_2(n) \rceil  - 1 = 2k-1  & \quad \text{for } & 2^{k-1}       & < \quad n  \quad \leq & 2^{k-1}+2^{k-2} , \\ 
		2 \lceil \log_2(n) \rceil  = 2 k   & \quad \text{for } & 2^{k-1}+2^{k-2} & <  \quad n   \quad \leq &  2^{k} . 
  \end{array} \right. 	
\end{equation} 
	 \end{theorem}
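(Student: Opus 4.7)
The plan is to prove the bound by turning the explicit recursive construction sketched before the theorem into a uniform inductive argument: each recursion step reduces a $k$-by-$l$ working block to a $k'$-by-$\lceil l/2\rceil$ upper-left sub-block while costing exactly $2$ units of nonnegative rank, and the recursion is terminated at a cheap base case.

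For the inductive step, I would formalize what happens at one level. Given a working block whose entries are of the form $c_{\alpha \pm i \pm j}$ with a two-diagonal ``staircase'' of zeros along its main and first subdiagonal---the structure inherited from the circulant slack matrix $S_n$---I subtract the two rank-two correction matrices illustrated in the $n=9$ example. Lemma~\ref{lemma1} already establishes that each correction has rank one, and nonnegativity follows from the monotonicity $0 \le c_{k-1} \le c_k$ for $k \le \lfloor n/2\rfloor$. The residual matrix then exhibits a cross-shaped zero pattern together with a row/column duplication: its second half of rows and columns repeats the first half, so its nonnegative rank equals that of an upper-left sub-block. This yields the key inequality $\rank_+(\text{block}) \le 2 + \rank_+(\text{sub-block})$.

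For the base case I would check that when the working block has at most four columns, the trivial factorization $B = B\cdot I$ achieves nonnegative rank equal to the column count. The shapes that actually arise are $3\times 3$, $4\times 3$, $3\times 2$, and $4\times 4$, giving ranks $3,3,2,4$ respectively. The improvement over the $2\lceil\log_2 n\rceil$ bound of Fiorini, Rothvoss and Tiwary comes from stopping the recursion at a width-$3$ base (cost $3$) in the first range of~\eqref{ubound} rather than pushing one level further to a width-$2$ base that would cost an extra rank-two correction; in the second range, such an early stop is not available and the full recursion is used.

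The counting then closes the induction: starting from $S_n$, the column width after $j$ steps is roughly $n/2^j$, and a direct case analysis on the two ranges in~\eqref{ubound} gives $2(k-2)+3 = 2k-1$ in the first range and $2(k-1)+2 = 2k$ in the second. The main technical obstacle, and the heart of the proof, is verifying that the algebraic form of the working block is preserved under the recursion: after the two rank-two subtractions and the passage to the upper-left sub-block, the remaining entries must still be of the form $c_{\alpha'\pm i'\pm j'}$ with a fresh two-diagonal zero staircase so that Lemma~\ref{lemma1} applies again at the next level. This closure property is what makes the purely algebraic recursion work uniformly and replaces Fiorini et al.'s geometric reflections; the parity split between even and odd $l$ must also be threaded carefully so that the widths $\lceil l/2\rceil$ at successive levels line up with the two ranges of~\eqref{ubound}.
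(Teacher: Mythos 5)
Your proposal follows essentially the same route as the paper's proof: the recursion $\rank_+(B) \leq 2 + \rank_+(B')$ obtained by subtracting two nonnegative \emph{rank-one} correction matrices (not ``rank-two'' as you write---each of the two corrections is rank one, together giving a rank-two modification of $B$), Lemma~\ref{lemma1} to certify the rank-one claim, nonnegativity from the monotonicity of the $c_k$'s, the cross-shaped zero pattern with duplicated rows and columns reducing to the upper-left sub-block, and the trivial base case $B = B I_l$ for $l \leq 4$ followed by the counting split on the critical values $2^{k-1}+1$, $2^{k-1}+2^{k-2}$, $2^{k-1}+2^{k-2}+1$, $2^k$. You correctly identify the closure property---that the residual sub-block again has the form $[c_{-i+j}]$ with the two-diagonal zero staircase so that Lemma~\ref{lemma1} applies at the next level---as the main thing to verify, which is precisely what the paper's explicit upper-right and lower-left block computations establish.
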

	\begin{proof}
	Let us first assume that the recursion described above is correct, that is, that at each step	the number of columns $c$ is decreased to $\lceil \frac{c}{2} \rceil$ while the nonnegative rank is increased by at most 2, unless $c \leq 4$ in which case we use the trivial factorization of rank $c$.  
	To verify that~\eqref{ubound} holds, we observe that the function $\lceil \frac{c}{2} \rceil$ is nondecreasing in $c$  
	hence it suffices to verify that the upper bound holds for the critical values $2^{k}, 2^{k-1}+1, 2^{k-1}+2^{k-2}$ and $2^{k-1}+2^{k-2}+1$ for any $k$. 
	For $n = 2^{k}$, 
	we check that the recursion divides the number of column by two at each step until the number of columns is equal to four which gives $\rank_+(S_n) \leq 2 \log_2(n)$. 
	For $n = 2^{k-1}+1$, 
	the number of columns $c = 2^p + 1$ for some $p$ is reduced at each step to $\lceil c / 2 \rceil = 2^{p-1} + 1$. After $k-2$ steps, we get a 3-by-3 matrix which gives $\rank_+(S_n) \leq 2 (k-2) + 3 = 2k-1$. 
	For $n = 2^{k-1}+2^{k-2}$, 
	after $k-2$ steps, the number of columns is equal to 3 hence we obtain $\rank_+(S_n) \leq  3 + 2(k-2) = 2k-1$; the case $n = 2^{k-1}+2^{k-2}+1$ is similar to that above. \\
	
	Let us now prove the recursion. To understand the proof, we encourage the reader to also look at the (short) Matlab code in Appendix~\ref{matcode} that constructs the factorizations\footnote{Note that we have numerically checked the correctness of the construction for all $n \leq 10000$.}. 
	
	Let $B$ be the $k$-by-$l$ upper left block of the slack matrix $S_n$, where $k = l$ or $l+1$ and $1 \leq k,l \leq n$. 
	Note that, at the first step, $k=l=n$. 
	
	\paragraph{Basic step.} If $l \leq 4$, $B$ is trivially factorized, that is, $B = B I_l$ where $I_l$ is the $l$-by-$l$ identity matrix.

	\paragraph{Recursion step.} If we show that 
	\[
	\rank_+(B) \leq 2 + \rank_+(B')  , 
	\]
	where $B'$ is the $k'$-by-$\lceil l/2 \rceil$ 
	upper left block of $B$, where $k'= \lceil k/2 \rceil$ except when $k = l$ is even 
	in which case $k' = \lceil k/2 \rceil + 1 = l/2 + 1$, 
	then the proof will be complete, by recursion (since $B'$ is also a $k'$-by-$l'$ upper left block of the slack matrix $S_n$ where 
	$l' = \lceil l/2 \rceil$ and $k' = l'$ or $l'+1$). 
	
	Since $B$ is the upper left block of $S_n$, it is a circulant matrix and has the following form 
	\[
	B = 
	 \begin{pmatrix}
			 c_0    & c_{1} & \dots & c_{-1+l}  \\
			 c_{-1}& c_{0}   & \dots  & c_{-2+l}  \\
			 \vdots &  \vdots & \dots  & \vdots \\
			 c_{-k+1} & c_{-k+2} & \dots &   c_{-k+l} \\ 
		\end{pmatrix} 
		= 
	\left[ c_{-i+j} \right]_{1 \leq i \leq k, 1 \leq j \leq l} , 
	\]
	where the $c_{k}$'s are given by~\eqref{cnk}. 
	The recursion works as follows. First, we subdivide the matrix $B$ into four blocks: 
	(i) the upper left $\lceil l/2 \rceil$-by-$\lfloor l/2 \rfloor$ block, 
		(ii) the upper right $\lceil l/2 \rceil$-by-$\lceil l/2 \rceil$ block,
		(iii) the lower left $(k-\lceil l/2 \rceil)$-by-$\lfloor l/2 \rfloor$ block, and 
		(iv) the lower right $(k-\lceil l/2 \rceil)$-by-$\lceil l/2 \rceil$ block. (Note that $ k - \lceil l/2 \rceil = \lfloor l/2 \rfloor + k-l$ which will be useful later.) 	
		Then, we make a nonnegative rank-one correction to the upper right and lower left blocks so that the off-diagonal entries of $B$ and the entries below are set to zero, that is, all entries $(i,j)$ of $B$ such that $i+j=l+1$ or $i+j=l+2$ will be set to zero. (Note that the entries $(i,j)$ of $B$ such that $i=j$ or $i=j+1$ are already equal to zero.) 
	
	\emph{Upper right block.} Let $p = \lceil l/2 \rceil$ and consider the $p$-by-$p$ upper right block of $B$
	\[ 
	 U = \begin{pmatrix}
			 c_{l-p}    & c_{l-p+1} & \dots & c_{l-1}  \\
			 c_{l-p-1} & c_{l-p}   & \dots  & c_{l-2}  \\
			 \vdots &  \vdots & \dots  & \vdots \\
			c_{l-2p+1} & c_{l-2p+2} & \dots &   c_{l-p} \\ 
		\end{pmatrix} 
		= 
	\left[ c_{-i+j} \right]_{1 \leq i \leq p, l-p+1 \leq j \leq l} 
	= 
	\left[ c_{-i+h+l-p} \right]_{1 \leq i \leq p, 1 \leq h = j-l+p \leq p} ,  
		\]
		from which we remove the matrix $U - [c_{1+p-i-j}]_{1 \leq i \leq p, 1 \leq j \leq p}$ which is equal to 
		\[ 
	 \begin{pmatrix}
			 c_{l-p} - c_{p-1}   & c_{l-p+1} - c_{p-2} & \dots & c_{l-1} - c_{0}  \\
			 c_{l-p-1} - c_{p-2} & c_{l-p} - c_{p-3}   & \dots  & c_{l-2} - c_{-1}  \\
			 \vdots &  \vdots & \dots  & \vdots \\
			c_{l-2p+1} - c_{0} & c_{l-2p+2} - c_{-1} & \dots &   c_{l-p} - c_{-p+1} \\ 
		\end{pmatrix} 
	= \left[ c_{\alpha-i+j} - c_{\beta-i-j} \right]_{1 \leq i \leq p, 1 \leq j \leq p} ,  
		\]
	where $\alpha = l-p$ and $\beta = 1+p$. 	By Lemma~\ref{lemma1}, that matrix has rank-one. Moreover, it is nonnegative because 
	for all $1 \leq i,j \leq p$ 
	\[
	c_{l - \lceil l/2 \rceil - i + j} 
	= 
		c_{\lfloor l/2 \rfloor - i + j}
	\geq 
	c_{1+ \lceil l/2 \rceil -i-j}
	\]
	since $\lfloor l/2 \rfloor  + j \geq 1 + \lceil l/2 \rceil -j$ for all $j$. 
	We obtain 
	\[
	\left[ c_{-i+j+l-p} - c_{\alpha-i+j} + c_{\beta-i-j} \right]_{1 \leq i \leq p, 1 \leq j \leq p} 
	=
	 \begin{pmatrix}
			 c_{p-1}   & c_{p-2} & \dots          & c_{1}  & 0  \\
			  c_{p-2} &  c_{p-3}   & \dots & 0  & 0  \\
			 \vdots &  \vdots & \dots  & \vdots & \vdots \\
			c_{1} & 0 & \dots &  c_{p-4}  &   c_{p-3} \\ 
			  0 & 0 & \dots &  c_{p-3}  &   c_{p-2} \\ 
		\end{pmatrix} 
		= \left[ c_{p+1-i-j} \right]_{1 \leq i \leq p, 1 \leq j \leq p} . 
	\]

	\emph{Lower left block.} Let $p = \lfloor l/2 \rfloor$ and $q = p + k-l = k - \lceil l/2 \rceil$ ($=p$ if $k = l$, $= p+1$ if $k = l+1$), 
	and consider the $q$-by-$p$ lower left block of $B$
	\[ 
	 L = \begin{pmatrix}
			 c_{-k+q}    & c_{-k+q+1} & \dots & c_{-k+q+p-1} \\
			 \vdots &  \vdots & \dots  & \vdots \\
			c_{-k+2} & c_{-k+3} & \dots &   c_{-k+p+1} \\ 
			c_{-k+1} & c_{-k+2} & \dots &   c_{-k+p} \\ 
		\end{pmatrix} 
		= 
	\left[ c_{-i+j} \right]_{k-q+1 \leq i \leq k, 1 \leq j \leq q} 
	= 
	\left[ c_{-h-k+q+j} \right]_{1 \leq h = i -k + q \leq q, 1 \leq j \leq p} ,  
		\]
		from which we remove the matrix $L - [c_{1+p-i-j}]_{1 \leq i \leq q, 1 \leq j \leq p}$ which is equal to
		\[ 
	 \begin{pmatrix}
			 c_{-k+q} - c_{p-1}   & c_{-k+q+p} - c_{p-2} & \dots & c_{-k+q+p-1} - c_{0}  \\
			 c_{-k+q-1} - c_{p-2} &  c_{-k+q} - c_{p-3}   & \dots  & c_{-k+q+p-2} - c_{-1}  \\
			 \vdots &  \vdots & \dots  & \vdots \\ 
			c_{-k+2} - c_{p-q+1} & c_{-k+3} - c_{p-q} & \dots &   c_{-k+p+1} - c_{-q+2} \\ 
			c_{-k+1} - c_{p-q} & c_{-k+2} - c_{p-q-1} & \dots &   c_{-k+p} - c_{-q+1} \\ 
		\end{pmatrix} 
		= 
	\left[ c_{\alpha-i+j} - c_{\beta-i-j} \right]_{1 \leq i \leq q, 1 \leq j \leq p} ,  
		\]
	where $\alpha = -k+q = -\lfloor l/2 \rfloor$ 
	and $\beta = 1+p = \lfloor l/2 \rfloor +1$, 
	which can be checked to be nonnegative (using the fact that $c_{-k} = c_{k+1}$, we have 
	 $c_{\alpha-i+j} = c_{-\alpha+i-j+1} = c_{\lfloor l/2 \rfloor+i-j+1} \geq c_{\lfloor l/2 \rfloor+1-i-j} = c_{\beta-i-j}$), 
	and has rank-one by Lemma~\ref{lemma1}. 
	We obtain 
	\[
	\left[ c_{-i-k+q+j} - c_{\alpha-i+j} + c_{\beta-i-j} \right]_{1 \leq i \leq q, 1 \leq j \leq p} 
	=
	 \begin{pmatrix}
			 c_{p-1}   & c_{p-2} & \dots          & c_{1}  & 0  \\
			  c_{p-2} &  c_{p-3}   & \dots & 0  & 0  \\
			 \vdots &  \vdots & \dots  & \vdots & \vdots \\
			c_{p-q+1} & c_{p-q} (=0) & \dots &  c_{-q+3}  &   c_{-q+2} \\ 
			  c_{p-q} (=0) & c_{p-q-1} & \dots &  c_{-q+2}  &   c_{-q+1} \\ 
		\end{pmatrix} . 
	\]
	Note that, if $k = l$ (that is, $p = q$) then $c_{p-q-1} = 0$ otherwise $k = l+1$ and $c_{p-q+1} = 0$. \\ 

		Finally, putting all the blocks together: 
		 the untouched upper left and lower right blocks, and the corrected upper right and lower left blocks, 	
		we obtain, after a nonnegative rank-two correction of $B$, the following $l$-by-$l$ matrix 
			\[
	 \begin{pmatrix}
0   & c_1    & c_2    & \dots    & c_2     & c_1 & 0 \\
0   & 0      & c_1    & \dots    & c_1     & 0   & 0 \\
c_1 & 0      & 0      & \dots     & 0       & 0   & c_1 \\
 \vdots &   \vdots     & \vdots &    & \vdots  &   \vdots  &  \vdots\\
	c_2   & c_1      & 0    & \dots   &  0    &  c_1  &  c_2 \\		
	c_1   & 0      & 0    & \dots     &  0    & 0   & c_1 \\		
	0   & 0      & c_1    & \dots    & c_1     & 0   & 0 \\	
		\end{pmatrix} 
		\]
		to which the following row 
		\[ 
	  \begin{pmatrix} 	
	0   & c_1    & c_2    & \dots   & c_2     & c_1 & 0 \\
		\end{pmatrix}   
	\]
	has to be added when $k = l+1$. 
	That matrix has the following properties 
			\begin{itemize}
			
			\item every column is repeated twice except the middle one when $l$ is odd  
			--more precisely, the $j$th and $(l-j+1)$th columns are identical for $1 \leq j \leq \lfloor l/2 \rfloor$--, and 
			
			\item every row is repeated twice except 
			(i) the first one when $k=l$, 
			(ii) the $(l/2+1)$th when $k=l$ is even, 
			(ii) the middle one when $k=l+1$ is odd 
			--more precisely, the $(i+s)$th and $(k-i+1)$th rows are identical for $1 \leq i \leq \lfloor k/2 \rfloor$,  
			and $s=0$ for $k = l+1$ and $s=1$ for $k = l$.   
			
			\end{itemize} 			
			
This concludes the recursion step, hence the proof. 
		\end{proof}
	
	A Matlab code that 
	generates the slack matrices of regular $n$-gons 
	and constructs the nonnegative factorizations described above for any $n$ 
	is available from 	
\begin{center}
\url{https://sites.google.com/site/exactnmf/regularngons}. 
\end{center}

\paragraph{Tightness of the Bound} 
It has to be pointed out that our inspiration for constructing the nonnegative factorizations used in Theorem~\ref{mainth} came from factorizations computed by our numerical solver~\cite{VGGT14} available on 
\url{https://sites.google.com/site/exactnmf/}.  

		Moreover, for $n$ up to $78$, the heuristic algorithm developed in~\cite{VGGT14} always found a factorization for the bound of Theorem~\ref{mainth} but never smaller. This suggests that our upper bound is tight, at least for small $n$.

\section{Conclusion} \label{conclu}

In this paper, we have first proposed a new lower bound for the rectangle covering number of the slack matrix of any $n$-gons, using a generalization of Sperner theorem; see Theorem~\ref{mainlow} and Corollary~\ref{maincor}. We hope that this idea will lead to new lower bound for other types of nonnegative matrices. 

Then, we proposed an algebraic proof for the upper bounds for the extension complexity of regular $n$-gons based on explicit nonnegative factorizations of the slack matrices of regular $n$-gons; see Theorem~\ref{mainth}.  
This bound slightly improves upon the previously best known upper bound from~\cite{FRT12} (our improvement essentially comes from improving the base case but we provided a new purely algebraic proof), and allows us to close the gap with the best known lower bound for several $n$-gons ($9 \leq n \leq 13$, $21 \leq n \leq 24$; see Figure~\ref{compabounds}). However, for most $n$-gons (precisely, for $n=14$, $17 \leq n \leq 20$, $25 \leq n \leq 30$ and $n \geq 33$), 
there is still a gap with the best known lower and upper bounds hence it is a direction for further research to improve these bounds to determine the extension complexity of these regular $n$-gons. 
Our numerical results suggest that the way to go would be to improve the lower bounds since our upper bound appears to be tight, 
at least for small $n$.

\section{Acknowledgement} We kinldy acknowledge the participants of the Dagstuhl seminar 15082 
on `Limitations of convex programming: lower bounds on extended formulations and factorization ranks' for insightful discussions, and we thank in particular the organizers, 
Hartmut Klauck, 
Troy Lee, 
Dirk Oliver Theis, and 
Rekha R. Thomas. We also thank the two anonymous reviewers for their insightful comments which helped improve the paper significantly.   
Finally, we thank Jo\~ao Gouveia for insightful discussions and for giving us the reference to the upper bounds for the boolean rank~\cite{BHJL86}. 

\small 

\bibliographystyle{spmpsci} 
\bibliography{Biography}

\begin{thebibliography}{10}
\providecommand{\url}[1]{{#1}}
\providecommand{\urlprefix}{URL }
\expandafter\ifx\csname urlstyle\endcsname\relax
  \providecommand{\doi}[1]{DOI~\discretionary{}{}{}#1}\else
  \providecommand{\doi}{DOI~\discretionary{}{}{}\begingroup
  \urlstyle{rm}\Url}\fi

\bibitem{BHJL86}
Barefoot, C., Hefner, K., Jones, K., Lundgren, J.: Biclique covers of the
  complements of cycles and paths in a digraph.
\newblock Congressus Numerantium \textbf{53}, 133--146 (1986)

\bibitem{BL09}
Beasley, L., Laffey, T.: {Real rank versus nonnegative rank}.
\newblock Linear Algebra and its Applications \textbf{431(12)}, 2330--2335
  (2009)

\bibitem{BTN01}
Ben-Tal, A., Nemirovski, A.: On polyhedral approximations of the second-order
  cone.
\newblock Mathematics of Operations Research \textbf{26}(2), 193--205 (2001)

\bibitem{CGP81}
de~Caen, D., Gregory, D., Pullman, N.: The boolean rank of zero-one matrices.
\newblock In: Proc. 3rd Caribbean Conference on Combinatorics and Computing,
  pp.\@ 169-173 (1981)

\bibitem{FP12}
Fawzi, H., Parrilo, P.: {Lower bounds on nonnegative rank via nonnegative
  nuclear norms}.
\newblock Mathematical Programming \textbf{153}(1), 41--66 (2015)

\bibitem{FP14}
Fawzi, H., Parrilo, P.: {Self-scaled bounds for atomic cone ranks: applications
  to nonnegative rank and cp-rank}.
\newblock Mathematical Programming  (2015)

\bibitem{FK11}
Fiorini, S., Kaibel, V., Pashkovich, K., Theis, D.: Combinatorial bounds on
  nonnegative rank and extended formulations.
\newblock Discrete Mathematics \textbf{313}(1), 67--83 (2013)

\bibitem{FMPTdW12}
Fiorini, S., Massar, S., Pokutta, S., Tiwary, H., de~Wolf, R.: Linear vs.
  semidefinite extended formulations: exponential separation and strong lower
  bounds.
\newblock In: Proceedings of the forty-fourth annual ACM symposium on Theory of
  computing, pp. 95--106. ACM (2012)

\bibitem{FRT12}
Fiorini, S., Rothvoss, T., Tiwary, H.: Extended formulations for polygons.
\newblock Discrete \& Computational Geometry \textbf{48}(3), 658--668 (2012)

\bibitem{GG10b}
Gillis, N., Glineur, F.: On the geometric interpretation of the nonnegative
  rank.
\newblock Linear Algebra and its Applications \textbf{437}(11), 2685--2712
  (2012)

\bibitem{G00}
Glineur, F.: {Computational experiments with a linear approximation of second
  order cone optimization} (2000).
\newblock Image Technical Report 0001, Service de Math\'ematique et de
  Recherche Op\'erationnelle, Faculté Polytechnique de Mons

\bibitem{G09}
Goemans, M.: {Smallest compact formulation for the permutahedron}.
\newblock Mathematical Programming \textbf{153}(1), 5--11 (2015).
\newblock \url{http://math.mit.edu/~goemans/PAPERS/permutahedron.pdf}

\bibitem{GGK13}
Gouveia, J., Grappe, R., Kaibel, V., Pashkovich, K., Robinson, R., Thomas, R.:
  Which nonnegative matrices are slack matrices?
\newblock Linear Algebra and its Applications \textbf{439}(10), 2921--2933
  (2013)

\bibitem{GPT13}
Gouveia, J., Parrilo, P., Thomas, R.: Lifts of convex sets and cone
  factorizations.
\newblock Mathematics of Operations Research \textbf{38}(2), 248--264 (2013)

\bibitem{KP11}
Kaibel, V., Pashkovich, K.: Constructing extended formulations from reflection
  relations.
\newblock In: Integer Programming and Combinatoral Optimization, pp. 287--300.
  Springer (2011)

\bibitem{KP12}
Kaibel, V., Pashkovich, K.: Constructing extended formulations from reflection
  relations.
\newblock In: M.~Jünger, G.~Reinelt (eds.) Facets of Combinatorial
  Optimization, pp. 77--100. Springer Berlin Heidelberg (2013)

\bibitem{Lu66}
Lubell, D.: A short proof of {S}perner's lemma.
\newblock Journal of Combinatorial Theory \textbf{1}(2), 299 (1966)

\bibitem{McM70}
McMullen, P.: The maximum numbers of faces of a convex polytope.
\newblock Mathematika \textbf{17}(02), 179--184 (1970)

\bibitem{OVW14}
Oelze, M., Vandaele, A., Weltge, S.: Computing the extension complexities of
  all 4-dimensional 0/1-polytopes (2014).
\newblock {arXiv:1406.4895}

\bibitem{Ro14}
Rothvoss, T.: The matching polytope has exponential extension complexity.
\newblock In: Proceedings of the 46th ACM symposium on Theory of Computing,
  STOC~'14, pp. 263--272 (2014)

\bibitem{Shit14}
Shitov, Y.: An upper bound for nonnegative rank.
\newblock Journal of Combinatorial Theory, Series A \textbf{122}, 126--132
  (2014)

\bibitem{S28}
Sperner, E.: Ein satz {\"u}ber untermengen einer endlichen menge.
\newblock Mathematische Zeitschrift \textbf{27}(1), 544--548 (1928)

\bibitem{VGGT14}
Vandaele, A., Gillis, N., Glineur, F., Tuyttens, D.: Heuristics for exact
  nonnegative matrix factorization.
\newblock Journal of Global Optimization  (2015).
\newblock \url{http://dx.doi.org/10.1007/s10898-015-0350-z}

\bibitem{Y91}
Yannakakis, M.: {Expressing Combinatorial Optimization Problems by Linear
  Programs}.
\newblock Journal of Computer and System Sciences \textbf{43}(3), 441--466
  (1991)

\bibitem{Z95}
Ziegler, G.: Lectures on Polytopes.
\newblock Springer-Verlag (1995)

\end{thebibliography}

\newpage 

\appendix

\section{Proof for Theorem~\ref{mainlow}} \label{appA} 

The solution $k^* = \lfloor r/2 \rfloor$ and $z^* = 1$ is optimal for  
\[ 
\min_{k\geq 1, z \geq 1, k+z \leq r} k! (r-k)! 
+ (k+z)! (r-k-z)! - 2 k! z! (r-k-z)! . 
\] 

\begin{proof} 
Let us observe the following 
\begin{itemize} 
\item the first (resp.\@ second) term is decreasing when $k$ (resp.\@ $k+z$) gets closer to $r/2$.  

\item the last term is strictly increasing in $z$ hence being minimized in $z=1$. 

\item $f(k,z) = f( r-k-z , z)$.  (Note that this implies that, for $r$ even, $k^* = r/2 -1$ is also optimal.)  

\end{itemize}

 The first two observations imply that, at optimality, the case $z \geq 2$ and $k+z \geq \lfloor r/2 \rfloor+1$ is not possible, otherwise we would decrease the objective function by decreasing $z$. 
In other words, either $z^* = 1$ or $k+z \leq \lfloor r/2 \rfloor$. 



\noindent \textbf{Case 1: $z^* = 1$.} Since $f(k,1) = f(r-k-1 , 1)$, we can assume w.l.o.g.\@ that $k \geq \lfloor r/2 \rfloor$ since either $k$ or $r-k-1$ is larger than $\lfloor r/2 \rfloor$. Showing that 
$f(k,1)$ 
is increasing for $\lfloor r/2 \rfloor \leq k \leq r-1$, that is, that $f(k,1) \leq f(k+1,1)$ for $k+1 \leq r-1$ will prove the result: 
\[
k! (r-k)! + (k+1)! (r-k-1)! - 2 k! (r-k-1)! 
\leq 
(k+1)! (r-k-1)! + (k+2)! (r-k-2)! - 2 (k+1)! (r-k-2)!
\] 
\[
\iff 
\]
\[
k! (r-k)! - 2 k! (r-k-1)! 
\leq 
  (k+2)! (r-k-2)! - 2 (k+1)! (r-k-2)! . 
\] 
Dividing by $k!$ and $(r-k-2)!$, 
 \[
 (r-k) (r-k-1) - 2  (r-k-1)
\leq 
  (k+2)(k+1)  - 2 (k+1) 
\] 
which is equivalent to 
\[
r^2 - 3r + 2 \leq 2k(r-1). 
\]
Since $k \geq \lfloor r/2 \rfloor$, $2k \geq r-1$ hence the above inequality would be implied by 
\[
r^2 - 3r + 2 \leq (r-1)^2 = r^2 - 2r + 1 \iff r \geq 1.  
\]

\noindent \textbf{Case 2: $k+z \leq \lfloor r/2 \rfloor$.}  
We have $k' = r-k-z \geq \lfloor r/2 \rfloor$ hence we can reduce this case to the case $k \geq \lfloor r/2 \rfloor$ without loss of generality, since $f(k,z) = f( r-k-z , z)$. 
For $k \geq \lfloor r/2 \rfloor$, it is clear that $z^* = 1$ is optimal (since last tow terms increase with $z$ in that case) 
so that this case reduces to case~1 when $z^* = 1$. 
 
\end{proof}  

\newpage

\section{Code for the Nonnegative Factorization of Slack Matrices of Regular $n$-gons} \label{matcode}

\lstinputlisting{NonnegFactoRegnGon.m} 

\lstinputlisting{offdiag_zeros.m} 

The code is available from \url{https://sites.google.com/site/exactnmf/regularngons}.  

\end{document}